\documentclass{amsart}

\newif\ifdraft\draftfalse
\newif\ifcite\citefalse
\newif\ifblow\blowfalse

\makeatletter
\@namedef{subjclassname@2020}{\textup{2020} Mathematics Subject Classification}
\makeatother

\usepackage{amsfonts,amssymb, amsmath}
\usepackage{url}
\usepackage{amscd}
\usepackage{mathrsfs} 
\usepackage[hyperindex,hidelinks]{hyperref}  
\newcommand{\cites}[1]{\cite{#1}}
\ifdraft\ifcite\usepackage{showkeys}\else\usepackage[notcite,notref]{showkeys}\fi\fi

\newtheorem{proposition}[equation]{Proposition}
\newtheorem{theorem}[equation]{Theorem}

\newtheorem{lemma}[equation]{Lemma}

\theoremstyle{definition}
\newtheorem{definition}[equation]{Definition}

\theoremstyle{remark}

\newtheorem{remark}[equation]{Remark}

\newtheorem{example}[equation]{Example}

\numberwithin{equation}{section}

\newcommand\beq{\begin{equation}}
\newcommand\eeq{\end{equation}}

\newcommand\p{\partial}

\newcommand{\R}{\mathbb R}
\newcommand{\C}{\mathbb C}

\newcommand{\Id}{\mathrm{Id}}
\newcommand{\eps}{\varepsilon}

\newcommand{\diag}{\operatorname{diag}}
\newcommand{\Inv}{\operatorname{Inv}}

\newcommand{\Wt}{\operatorname{Wt}}

\newcommand\al{\alpha}

\newcommand\ld{\lambda}
\newcommand\om{\omega}

\newcommand\lb{\label}

\newcommand\er{\eqref}

\newcommand\ol{\overline}

\newcommand\on{\operatorname}
\newcommand\la{\langle}
\newcommand\ra{\rangle}

\newcommand\lp{\left(}
\newcommand\rp{\right)}

\newcommand\cc{{\mathcal C}}

\newcommand\cf{{\mathcal F}}

\newcommand\ck{{\mathcal K}}

\newcommand\bc{{\mathbb C}}

\newcommand\br{{\mathbb R}}

\newcommand\SL{\mathrm{SL}}

\newcommand\fg{\mathfrak{g}}
\newcommand\fh{\mathfrak{h}}

\newcommand\fl{\mathfrak{l}}

\newcommand\fn{\mathfrak{n}}

\newcommand\fp{\mathfrak{p}}

\newcommand\fz{\mathfrak{z}}

\newcommand\wtjj{\Wt(V^J_{\omega_j})}

\title
[Weyl chambers and Toda bubbling]
{Weyl Chambers, Asymptotic Expansions, and Blowup Analysis of Explicit Toda Solutions}
\author{Zhaohu Nie}
\address{Department of Mathematics and Statistics, Utah State University, Logan, Utah 84322-3900, USA}
\email{zhaohu.nie@usu.edu}
\subjclass[2020]{Primary 35B44, 35J47; Secondary 17B22, 20F55}
\keywords{Toda system, blow-up analysis, Weyl group, Levi subsystem, mass}

\begin{document}

\begin{abstract}
Blowup analysis is a central part of geometric analysis, and systems of
PDEs such as Toda systems exhibit particularly rich blowup behavior.
In \cite{IMRN}, the author constructed explicit families of radial Toda
solutions whose blowup masses correspond to arbitrary elements of the
Weyl group of the underlying Lie algebra. In this paper, we study these
families within the framework of blowup analysis developed by C.-S. Lin
and his coauthors. The key is an
asymptotic analysis of the solution family that distinguishes Cartan elements in the interior
of a Weyl chamber from those lying on its boundary faces.
We prove a uniform upper bound for $u_i^{\lambda,H}(r)+2\log r$,
so that the selection estimate holds with the center at the origin. We show that the successive bubbling scales
for slow decay are determined by the
intersections of the ray $H+tq_\mu$ with root hyperplanes. 
At each
crossing, rescaling and the face asymptotics identify the limiting
profiles as explicit solutions of the corresponding Levi--Toda
subsystems. 
The masses of these successive bubbles, computed using
\cite{KLNW}, account for the entire blowup mass.
Examples in type $A_2$ demonstrate this identification and exhibit both separated scalar bubbles and a
simultaneous full-system bubble.
\end{abstract}

\maketitle

\section{Introduction}

For a complex semisimple Lie algebra $\fg$ of rank $n$ with Cartan
matrix $A=(a_{ij})$, consider the singular Toda system
\begin{equation}\label{Toda}
\begin{cases}
\displaystyle
\Delta u_i+4\sum_j a_{ij}e^{u_j}
   =4\pi\gamma_i\delta_0
   &\text{in }B_2(0),\\[4pt]
\displaystyle
\int_{B_2(0)}e^{u_i}<\infty,
   \qquad \gamma_i>-1,
   &1\le i\le n,
\end{cases}
\end{equation}
where $\delta_0$ is the Dirac delta measure at the origin of the plane, and $B_2(0)$ is the disk centered at the origin with radius 2. In this paper, $\sum_j$ denotes $\sum_{j=1}^n$, unless otherwise indicated.

We work with solutions smooth on $B^*_2:=B_2(0)\setminus\{0\}$ whose regular parts
\beq\lb{reg part}
\hat u_i(z)=u_i(z)-2\gamma_i\log|z|
\eeq
extend continuously across the origin. 
A sequence $u^k=(u_1^k,\ldots,u_n^k)$ of solutions of
\er{Toda} is said to blow up at the origin if there exist
points $x_k\to 0$ such that
\begin{equation}\label{blowup}
\lim_{k\to\infty}\max_{1\le i\le n}\hat u_i^k(x_k)=+\infty.
\end{equation}
The origin is the only blowup point of the sequence if furthermore  
$$
\max_{1\leq i\leq n}\sup_{x\in \cc} \hat u_i^k(x)\leq C_{\cc},\quad \forall\,\cc\Subset B^*_2\text{ compact subset}.
$$
We are concerned with this case of blowup only at the origin.

For a solution $u=(u_1,\ldots,u_n)$ of \eqref{Toda}, define
\beq\lb{enclosed}
\sigma_i(u;r)=\frac{1}{\pi}\int_{B_r(0)}e^{u_i},
\qquad 0<r<2.
\eeq
Note that the requirement that $\gamma_i>-1$ guarantees that the $e^{u_i}$ are locally integrable
at the origin. 
For a blowup sequence $u^k$ with uniformly bounded masses,
upon passing to a subsequence, we define its local mass vector 
$\sigma=(\sigma_1,\ldots,\sigma_n)$ at the origin by
\begin{equation}\lb{local mass}
\sigma_i(u)
=\lim_{r\downarrow0}\lim_{k\to\infty}\sigma_i(u^k;r)
=\frac{1}{\pi}\lim_{r\downarrow0}\lim_{k\to\infty}
  \int_{B_r(0)}e^{u_i^k}\,dx.
\end{equation}

The study of Toda systems, which are integrable systems, is closely tied to the underlying Lie algebra. 
We now introduce the Lie-theoretic setup and refer the reader to \cite{Knapp} for more details. 
Let $\fh$ be a Cartan subalgebra of the semisimple $\fg$, and $\Phi$ the root system of
$\fg$ with respect to $\fh$. 
Let $\fh_0'$ be the real span of the roots, and let $\fh_0$ be the
real form of $\fh$ on which every root is real-valued. 
We denote by $\la\cdot,\cdot\ra: \fh_0'\times\fh_0\to \br$ the canonical pairing, and by $(\cdot,\cdot)$ the Killing form
on $\fh_0$ and its dual form on $\fh_0'$.

Fix a decomposition $\Phi =\Phi^+\sqcup\Phi^-$ of the roots into positive and negative ones. 
Let
$\Delta=\{\alpha_1,\ldots,\alpha_n\}$ be the simple roots, and 
$\omega_1,\ldots,\omega_n$ the fundamental weights determined by 
$
2\frac{(\omega_i, \alpha_j)}{(\al_j, \al_j)} = \delta_{ij}.
$
With the convention $a_{ij} = 2\frac{(\alpha_i, \alpha_j)}{(\al_j, \al_j)}$, we thus have
\beq\lb{raw}
\alpha_i=\sum_{j=1}^n a_{ij}\omega_j, \quad i=1,\dots, n.
\eeq

Let $W$ be the corresponding Weyl group \cite{Knapp}, acting on both $\fh_0$ and $\fh_0'$. 
Let $s_i$ denote the reflection in the simple root $\alpha_i$. 
These reflections generate $W$. 
The smallest number of such simple reflections required in an element $w\in W$ is called its length $\ell(w)$. 
The unique element of $W$ with the maximum length is denoted by $\kappa$, which is characterized by the fact that it maps 
all the positive roots to negative roots. 

Let
\[
C_0=\{X\in\fh_0:\langle\alpha_i,X\rangle>0,\ 1\le i\le n\}
\]
be the dominant Weyl chamber. Set $\mu_i=1+\gamma_i>0$, and let
$q_\mu\in C_0$ be the element uniquely determined by
\beq\lb{def q}
\langle\alpha_i,q_\mu\rangle=\mu_i,
\qquad 1\le i\le n.
\eeq
Therefore the element $q_\mu\in \fh_0$ determines the singularity strengths  at the origin. Later on, this element will go through subtle changes during our bubbling procedure (see \er{strength}), which determines the singularity strength for the subsystems along the way.  
(This element was denoted by $w_0$ in \cites{KLNW, IMRN} following \cite{Kostant};
we use $q_\mu$ here to distinguish it from Weyl-group elements $w\in W$.)

For $w\in W$, define its Weyl-group mass vector by
\beq\lb{mass for w}
\sigma_i(w)=\langle\omega_i-w\omega_i,q_\mu\rangle,
\eeq
where $\omega_i$ is the $i$th fundamental weight. 

In \cite{IMRN}, the author constructed, for every $H\in \fh_0$, 
a family of singular Toda solutions $u^{\lambda,H}$ with $\ld>0$. These solutions are radial, and we write $u_i^{\lambda,H}(r)$ with
$r=|z|$. Section~\ref{sec:weights} recalls their exact formulas in \er{eq:Ui-ui}. 

In \cite{IMRN}, the author proved that for every $\tau\in W$ and $H\in \tau C_0$, the local masses satisfy
\begin{equation}\label{eq:endpoint-intro}
\sigma_i
=\lim_{r\downarrow0}\lim_{\lambda\to\infty}
  \sigma_i(u^{\lambda,H};r)
=\sigma_i(\tau) =\langle\omega_i-\tau\omega_i,q_\mu\rangle.
\end{equation}

We also mention two related works of the author \cite{affine,coco}.
In \cite{affine}, we give the affine analogue of the constructions in \cite{IMRN}.
In \cite{coco}, we prove that, under the hypotheses stated there,
all possible blowup masses for the constant-coefficient Toda system
\er{Toda} correspond to Weyl group elements.
Thus, the examples in \cite{IMRN} realize all possible blowup masses
in that setting.

We now introduce the blowup-analysis framework developed in
\cites{LS, JLW,LWZ,LWYZ,LYZ, CNY}.  
The central questions in blowup analysis are to identify the concentration points, 
find the bubbling
scales, determine the limiting system at each scale, and describe
how the local masses change from one scale to the next.

The first step in this framework is to select a finite set
$\Sigma_\lambda$ such that
\[
u_i^{\lambda, H}(z)
+2\log\on{dist}(z,\Sigma_\lambda)\leq C,
\qquad 1\leq i\leq n,
\]
where $C$ is independent of $\lambda$ and $z$. For our explicit family, we prove
this estimate with $\Sigma_\lambda=\{0\}$ for every $\lambda$.
The following quantity is of central importance to this paper: 
\beq\lb{def f}
 F_i^{\lambda,H}(r)=u_i^{\lambda,H}(r)+2\log r.
 \eeq

\begin{theorem}[Uniform selection bound]\label{just 0}
Let $\kappa$ be the longest element of $W$ and set
$\mathcal M_i=\sigma_i(\kappa)=\langle\omega_i-\kappa\omega_i,q_\mu\rangle$.
For every $H\in\fh_0$, $\lambda>0$, and $r>0$,
$$F_i^{\lambda,H}(r)\leq2\log\frac{\mathcal M_i} 2,$$
so the selection estimate holds with $\Sigma_\lambda=\{0\}$.
\end{theorem}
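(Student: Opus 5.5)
The plan is to reduce the bound to an elementary variance estimate, using the explicit formulas \eqref{eq:Ui-ui} for $u^{\lambda,H}$. Throughout, pass to the logarithmic variable $t=\log r$. For radial functions $r^2\Delta=\frac{d^2}{dt^2}$, so the quantity $F_i^{\lambda,H}$ in \eqref{def f} becomes a function of $t\in\br$.

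The solutions of \cite{IMRN} are presumably built from functions $U_j$ with $u_i=\sum_j a_{ij}U_j$ (up to the singular term), where $e^{-U_j}$ (or a fixed power of it) is a $\tau$-function. I would write this $\tau$-function as
\[
\tau_j(r)=\sum_{\nu\in\Wt(V_{\omega_j})} c_\nu(\lambda,H)\, r^{2\langle\omega_j-\nu,q_\mu\rangle},
\qquad c_\nu(\lambda,H)\ge 0 .
\]
The coefficients should be nonnegative because they are squared norms of weight components of a group element acting on a highest weight vector. The Toda system \eqref{Toda} is then equivalent, away from the origin, to
\[
\Delta \log\tau_i = 4e^{u_i}.
\]
The constant $4$ is fixed by the normalization in \eqref{Toda} together with \eqref{raw}. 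Multiplying by $r^2$ gives the key identity
\[
4e^{F_i^{\lambda,H}} = \frac{d^2}{dt^2}\log\tau_i\big(e^t\big).
\]
The first step is to verify this identity carefully from \eqref{eq:Ui-ui}, including the normalization constants; this is where I expect the only real bookkeeping.

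The second step uses $\tau_i(e^t)=\sum_\nu c_\nu e^{2t x_\nu}$ with $x_\nu=\langle\omega_i-\nu,q_\mu\rangle$. For such a sum, $\frac{d^2}{dt^2}\log\tau_i=4\,\mathrm{Var}_t(x)$. Here $\mathrm{Var}_t$ is the variance of $x_\nu$ under the probability weights $p_\nu(t)=c_\nu e^{2tx_\nu}/\tau_i$. Hence $e^{F_i}=\mathrm{Var}_t(x)$.

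By Popoviciu's inequality, any distribution supported in an interval of length $L$ has variance at most $L^2/4$. The values $x_\nu$ range over $[\min_\nu x_\nu,\max_\nu x_\nu]$. Since $q_\mu\in C_0$, the maximum of $\langle\nu,q_\mu\rangle$ over the weights of $V_{\omega_i}$ is attained at $\nu=\omega_i$, and the minimum at the lowest weight $\kappa\omega_i$. So the range has length $\langle\omega_i-\kappa\omega_i,q_\mu\rangle=\mathcal M_i$. This gives
\[
e^{F_i^{\lambda,H}}\le \mathcal M_i^2/4, \qquad\text{that is,}\qquad F_i^{\lambda,H}\le 2\log(\mathcal M_i/2),
\]
uniformly in $H$, $\lambda$ and $r$. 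The bound is independent of $H$ and $\lambda$ because these enter only through the nonnegative coefficients $c_\nu$. The selection estimate with $\Sigma_\lambda=\{0\}$ is then immediate, since $\on{dist}(z,\{0\})=r$.

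As a sanity check in type $A_1$: $\tau=c_0+c_1r^{2\mu}$ and $\mathcal M=\mu$, and the bound $e^{F}\le\mu^2/4$ agrees with the scalar Liouville energy identity. The main obstacle is therefore only the precise form of the identity in the first step. If \eqref{eq:Ui-ui} instead expresses $e^{-U_j}$ with a different power or constant, the same argument goes through after rescaling. The constant then remains sharp, because the variance bound is attained when the mass concentrates equally at the two extreme weights.
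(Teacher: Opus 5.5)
Your proposal is correct and is essentially the paper's proof. Your $\tau_i$ is exactly $T_i^{\lambda,H}$ in \eqref{eq:T-S}, and your key identity is \eqref{eq:radial-density}. The paper likewise writes $e^{F_i}$ as the variance of the weight evaluations under the normalized weights, working with $S_i=r^{-2\langle\omega_i,q_\mu\rangle}T_i$, which has the same second logarithmic derivative, and then proves the Popoviciu bound directly from $(b_\eta-L_i)(B_i-b_\eta)\ge0$. Your closing sharpness aside is not part of the statement and does not follow from Popoviciu's equality case. Half the mass at each of $\omega_i$ and $\kappa\omega_i$ need not be approachable within the family; it is not, for instance, for $\omega_1$ in type $C_n$ with $n\ge2$.
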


Next, we study the bubbling scales using the following concept of fast and slow 
decay introduced in the works \cite{LWZ, LWYZ, LYZ}. 
We study families of radii $r_\lambda$ depending on $\lambda$, particularly the
exponential scales $r_\lambda^t=e^{-\lambda t}$ indexed by $t>0$.

\begin{definition}\label{decays}
For a radius family $r_\lambda$, component $i$ has \emph{fast decay}
at $r_\lambda$ if
\[
F_i^{\lambda,H}(r_\lambda)\to-\infty
\quad\text{as }\lambda\to\infty.
\]
Otherwise, it has \emph{slow decay} at $r_\lambda$.

The solution family $u^{\lambda, H}$ has \emph{fast decay} at $r_\lambda$
if every component has fast decay there.
Otherwise, it has \emph{slow decay} at $r_\lambda$.
\end{definition}

The importance of fast and slow decay is as follows. The Toda system admits the following scaling. If $u_i$ is a solution of \er{Toda} on $\bc^*$, then for $\eps>0$, 
\beq\lb{scaling}
\widetilde u_i(y) = u_i(\eps y) + 2\log \eps
\eeq
is also a solution. 
For the explicit family considered here, we will show that at the
scales identified below, the fast components tend to $-\infty$ after
rescaling, while the slow components converge to solutions of a subsystem. 

\begin{example}[The scalar model] To make the reader more familiar with the construction of this paper, we first consider the simplest Liouville equation, the example of Toda system \er{Toda} for the Lie algebra $A_1$. 
For the nonsingular Liouville equation $\Delta u+8e^u=0$ with $\gamma=0$, consider the family 
\[
 u^k(r)=2\log\frac{k}{1+k^2r^2},\quad k>0.
\]
($k=e^\lambda$ in our general notation.) 
Then $u^k(0)=2\log k\to\infty$ and
\[
 u^k(r)+2\log r=2\log\frac{kr}{1+k^2r^2}\leq 2\log\frac 1 2,
\]
confirming Theorem~\ref{just 0}. 

The radius $r=k^{-1}$ is slow, and its rescaling \er{scaling} is
\begin{equation*}
\widetilde u(\rho)= u^k(k^{-1}\rho)-2\log k \longrightarrow -2\log(1+\rho^2),
\end{equation*}
the standard solution to the Liouville equation. 
For each fixed $a>0$, $a\ne1$, the radius $r=k^{-a}$ is fast.

The enclosed \er{enclosed} and local masses \er{local mass} are
\[
\sigma(u^k; r)= \frac1\pi\int_{B_r}e^{u^k}
 =\frac{k^2r^2}{1+k^2r^2},\qquad
 \sigma=\lim_{r\to 0}\lim_{k\to \infty} \sigma(u^k, r) = 1.
\]
This is also the mass of the standard solution on the whole plane
$$
\frac{1}{\pi}\int_{\br^2}  e^{\widetilde u} = 1.
$$

This example illustrates the general procedure of locating slow decay radii, finding the limiting bubbles, and computing their mass increments, which we will carry out for our explicit families $u^{\ld, H}$ for Toda systems of all Lie types. 
\end{example}

The geometry of the fast and slow scales for our solution family  is described by Weyl
chambers and their faces.
Define the relatively open face
\beq\lb{fj}
 \mathcal F_J=\{X\in\overline{C_0}:
 \langle\alpha_j,X\rangle=0, j\in J;\quad
 \langle\alpha_i,X\rangle>0, i\notin J\}.
\eeq
For $J=\emptyset$, we have $\cf_\emptyset = C_0$, the interior. 
These faces give a stratification
\[
\overline{C_0}=\bigsqcup_{J\subset\{1,\ldots,n\}}\mathcal F_J.
\]

Relatedly, for $J\subset\{1,\ldots,n\}$, we use $\fl_J$ to denote the reductive Levi subalgebra of the parabolic subalgebra $\fp_J$ defined by $J$ (see, for example, \cite{Knapp}). Then 
\beq\lb{lg}
\fl_J = \fg_J\oplus \fz_{J},
\eeq
where 
\beq\lb{cx ct}
\fz_{J}=\{Z\in\fh:\langle\alpha_i,Z\rangle=0,\ i\in J\}
\eeq
is the center of $\fl_J$, and the semisimple part $\fg_J=[\fl_J,\fl_J]$ is generated by the simple-root generators
indexed by $J$. 
The root system of $\fg_J$ is
$\Phi_J=\Phi\cap\operatorname{span}_{\mathbb Z}\{\alpha_i:i\in J\}$,
and its Cartan matrix is $A_J=(a_{ij})_{i,j\in J}$. Its Dynkin diagram is the full subdiagram of that of $\fg$ on
$J$, which may be disconnected. Its Weyl group is
$W_J=\langle s_i:i\in J\rangle$, with longest element $\kappa_J$. We call the Toda system for $\fg_J$ the \emph{Levi-Toda system} associated to the subset $J$. 

Our main result is the following theorem that identifies every bubbling scale and its limiting profile. 

\begin{theorem}[Bubbling decomposition]\label{thm:main}
Let 
\beq\lb{def H}
\tau \in W,\ H_0\in C_0,  \text{ and }H=\tau H_0.
\eeq 
Consider the explicit Toda family
$u^{\lambda,H}$ in \er{eq:Ui-ui}.  Set
\begin{equation*}
X_H(t)=H+tq_\mu,\qquad t\ge0.
\end{equation*}
\begin{enumerate}
\item
There are finitely many distinct crossing times
\[
0<t_1<\cdots<t_N
\]
(the list is empty if $N=0$)
at which $X_H(t)$ lies on a root hyperplane. 
Set $t_0=0$ and $t_{N+1}=\infty$, and define $w_a\in W$ by
\beq\lb{def w_a}
X_H(t)\in w_aC_0
\qquad\text{for }t_a<t<t_{a+1},
\qquad 0\le a\le N.
\eeq
Then $w_0=\tau$, $w_N=\Id$. 
For each $1\le a\le N$, there is a nonempty set
$J_a\subset\{1,\ldots,n\}$ such that
\begin{equation}\lb{wwk}
X_H(t_a)\in w_a\mathcal F_{J_a},
\qquad
w_{a-1}=w_a\kappa_{J_a}.
\end{equation}
Consequently,
\begin{equation*}
\tau=\kappa_{J_N}\cdots\kappa_{J_1}.
\end{equation*}
The factorization is length-additive:
\beq\lb{leng+}
 \ell(\tau)=\sum_{a=1}^N\ell(\kappa_{J_a}).
\eeq
If $H$ is generic in $\tau C_0$, then every $J_a$ is a singleton,
say $J_a=\{j_a\}$, and
\[
\tau=s_{j_N}\cdots s_{j_1}
\]
is a reduced expression.

\item 
For fixed $t\in(t_a,t_{a+1})$, put $r_\lambda^t=e^{-\lambda t}$.
Then every rescaled component satisfies
\[
 u_i^{\lambda,H}(r_\lambda^t y)+2\log r_\lambda^t
 \longrightarrow-\infty
 \quad\text{locally uniformly on }\C^*.
\]
In particular, all components have fast decay at $r=r_\lambda^t$. Furthermore, 
\begin{equation*}
\lim_{\lambda\to\infty}
\frac1\pi\int_{B_{e^{-\lambda t}}}
e^{u_i^{\lambda,H}}
=
\sigma_i(w_a).
\end{equation*}

\item At 
\begin{equation}\lb{aftall}
r=r_\lambda^{t_a}=e^{-\lambda t_a},
\qquad 1\le a\le N, 
\end{equation}
the slow components are exactly
those indexed by $J_a$.

Define
\beq\lb{a-scale}
\widetilde u_i^{\lambda,a}(y)
=
u_i^{\lambda,H}(r_\lambda^{t_a}y)
+2\log r_\lambda^{t_a}.
\eeq
Then 
$$
\widetilde u_i^{\lambda,a}
\longrightarrow
\begin{cases}
-\infty, & i\notin J_a,\\
v_i^{(a)}, & i\in J_a,
\end{cases}
$$
locally uniformly on $\C^*$, where 
$v^{(a)}=(v_i^{(a)})_{i\in J_a}$ is a radial finite-energy
solution of the singular
Levi--Toda system
\begin{equation}\label{eq:levi-main}
\Delta v_i^{(a)}
+4\sum_{j\in J_a}a_{ij}e^{v_j^{(a)}}
=
4\pi\gamma_i^{(a)}\delta_0
\quad\text{in }\R^2,
\qquad i\in J_a,
\end{equation}
where the strength of the singular source is given by 
\begin{equation}\lb{strength}
1+\gamma_i^{(a)}
=\langle\alpha_i, w_a^{-1}q_\mu\rangle
 =\langle w_a\alpha_i,q_\mu\rangle>0, \qquad i\in J_a.
\end{equation}
Furthermore, the $v^{(a)}$ are the explicit Levi-Toda solutions identified in 
\eqref{eq:active-profile}. 

\item
Let
\beq\lb{whole mass}
M_i^{(a)}
=
\begin{cases}
\displaystyle
\frac1\pi\int_{\R^2}e^{v_i^{(a)}},
&i\in J_a,\\[6pt]
0,&i\notin J_a.
\end{cases}
\eeq
Then
\begin{equation*}
\begin{aligned}
M_i^{(a)}
&=
\sigma_i(w_{a-1})-\sigma_i(w_a).
\end{aligned}
\end{equation*}
Therefore, the successive bubbles account for the entire
limiting local mass:
\begin{equation*}
\sum_{a=1}^N M_i^{(a)}
=
\langle\omega_i-\tau\omega_i,q_\mu\rangle.
\end{equation*}
At a generic crossing $J_a=\{i_a\}$, this becomes
\[
M_i^{(a)}
=
\delta_{i,i_a}
\langle w_a\alpha_{i_a},q_\mu\rangle.
\]
\end{enumerate}
\end{theorem}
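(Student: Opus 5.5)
The explicit formulas \eqref{eq:Ui-ui} are not visible at this point, so I will work from the structure I expect them to have from \cite{IMRN}. The ansatz is that $e^{-U_i}$ is a finite sum over the weights $\nu\in\Wt(V_{\omega_i})$ of terms of the form
\[
c_\nu\, e^{2\lambda\langle\nu,H\rangle}\, r^{2\langle\omega_i-\nu,q_\mu\rangle}\cdot(\text{positive coefficients}),
\]
and $u_i$ is obtained from the $U_j$ through the Cartan matrix.

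With $r=e^{-\lambda t}$, the exponent of such a term is
\[
2\lambda\bigl(\langle\nu,H\rangle-t\langle\omega_i-\nu,q_\mu\rangle\bigr)
=2\lambda\bigl(\langle\nu,X_H(t)\rangle-t\langle\omega_i,q_\mu\rangle\bigr).
\]
So the dominant weight at scale $t$ is the one maximizing $\nu\mapsto\langle\nu,X_H(t)\rangle$ over $\Wt(V_{\omega_i})$. When $X_H(t)\in w_aC_0$, this maximizer is the extremal weight $w_a\omega_i$, and it is unique for every $i$.

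\textbf{Step 1: part (1).} This is pure Weyl-chamber combinatorics.
\begin{itemize}
\item The ray $X_H(t)$ is affine in $t$. For each root $\beta$, the function $\langle\beta,X_H(t)\rangle$ is affine with slope $\langle\beta,q_\mu\rangle$, which is nonzero. Hence each root hyperplane is crossed at most once, which gives finitely many crossing times.
\item For large $t$ the point lies in $C_0$, so $w_N=\Id$; at $t=0$ it lies in $\tau C_0$, so $w_0=\tau$.
\item Each positive root $\beta$ with $\langle\beta,H\rangle<0$ changes sign exactly once, from negative to positive. This happens precisely for the roots in the inversion set of $\tau^{-1}$.
\item At $t_a$, the point $w_a^{-1}X_H(t_a)$ lies in $\overline{C_0}$, hence in a unique face $\mathcal F_{J_a}$. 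Near $t_a$, the point moves from $w_a\kappa_{J_a}C_0$ into $w_aC_0$, since $\kappa_{J_a}$ is the element of $W_{J_a}$ sending the local chamber on the other side to $C_0$. This uses the standard fact that the chambers adjacent to a face are the $W_{J}$-translates.
\item Length additivity \eqref{leng+} follows because each root hyperplane is crossed once and the crossings at $t_a$ number exactly $|\Phi^+_{J_a}|=\ell(\kappa_{J_a})$. For generic $H$, each crossing is a codimension-one wall, so every $J_a$ is a singleton.
\end{itemize}

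\textbf{Step 2: parts (2) and (3).} These are the asymptotic analysis. For $F_i$ at a rescaled point $r=e^{-\lambda t}y$, only the maximizing weight terms matter.
\begin{itemize}
\item In the open interval $t\in(t_a,t_{a+1})$, a single term dominates $e^{-U_i}$ for every $i$. Then $u_i+2\log r$ equals a linear combination whose $\lambda$-coefficient is
\[
-2\Bigl(\sum_j a_{ij}\bigl(\langle w_a\omega_j,X\rangle-t\langle\omega_j,q_\mu\rangle\bigr)\Bigr)+(\ldots).
\]
I expect this to reduce to $-2\langle w_a\alpha_i,X_H(t)\rangle<0$, after combining with \eqref{raw} and the $2\log r$. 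This gives uniform divergence to $-\infty$ on compact subsets of $\C^*$.
\item For the mass at scale $t$ I would use the flux identity
\[
\frac1\pi\int_{B_r}e^{u_i}=\tfrac12\sum_j(\ldots)\,r\,U_j'(r),
\]
obtained from the divergence form $\Delta U_j$. The leading exponent $2\langle\omega_j-w_a\omega_j,q_\mu\rangle$ of $r$ then yields $\sigma_i(w_a)$ via \eqref{mass for w}.
\item At $t=t_a$, the maximizers of $\langle\nu,X\rangle$ form the orbit $W_{J_a}w_a\omega_i$, up to the face structure. The surviving terms, after rescaling by $y$, assemble into the \cite{IMRN} formula for the Levi subsystem $\fg_{J_a}$, with shifted Cartan element $w_a^{-1}q_\mu$ restricted to $\fg_{J_a}$. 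This produces \eqref{strength}.
\item For $i\notin J_a$, the extremal weight $w_a\omega_i$ is fixed by $W_{J_a}$, so the coefficient $\langle w_a\alpha_i,X\rangle$ stays strictly positive and $\widetilde u_i\to-\infty$.
\end{itemize}

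\textbf{Step 3: part (4).} The mass of $v^{(a)}$ is given by the Levi version of \eqref{eq:endpoint-intro}, or directly by \cite{KLNW}: it equals $\langle\omega_i-\kappa_{J_a}\omega_i,w_a^{-1}q_\mu\rangle$. Using \eqref{wwk}, this is
\[
\langle w_a\omega_i-w_{a-1}\omega_i,q_\mu\rangle=\sigma_i(w_{a-1})-\sigma_i(w_a).
\]
The sum telescopes to $\sigma_i(\tau)-\sigma_i(\Id)=\sigma_i(\tau)$. In the generic case $\kappa_{J_a}=s_{i_a}$ and $\omega_i-s_{i_a}\omega_i=\delta_{i,i_a}\alpha_{i_a}$, which gives the stated formula.

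\textbf{Main obstacle.} The hardest step is the face asymptotics in Step 2. I must show that the maximal-weight terms on $W_{J_a}$-orbits of weights, including any non-extremal weights lying on the face, reproduce exactly the Levi--Toda explicit solution with the right coefficients, and that all other terms are exponentially negligible uniformly on compacts. My first approach would be to restrict the representation $V_{\omega_i}$ to $\fl_{J_a}$ and decompose it. The top $\fl_{J_a}$-component containing $w_a\omega_i$ should carry the leading terms, and the remaining components should be exponentially smaller.
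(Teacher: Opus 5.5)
Your Part (1), the fast-decay and enclosed-mass claims of Part (2), and the telescoping in Part (4) follow the paper's route. One correction in Part (2): the flux identity involves only $U_i$, namely $\frac1\pi\int_{B_r}e^{u_i}=\gamma^i-\frac12 rU_i'(r)$. This equals $\frac12 r\partial_r$ of the log of the $i$th weight sum, a convex combination of the exponents $\langle\omega_i-\eta,q_\mu\rangle$, and no sum over $j$ enters.

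\textbf{The gap is in Part (3).} It sits exactly at the step you call the main obstacle, and your proposed remedy does not close it. Restricting $V_{\omega_j}$ to the Levi algebra tells you \emph{which} terms survive at $X_*=X_H(t_a)$. They are the weights $w_a\Wt(V^{J_a}_{\omega_j})$, in general strictly larger than the extremal set $w_aW_{J_a}\omega_j$, with full ambient weight spaces $\bar w_a V^{J_a}_{\omega_j}[\nu]$. (Your $W_{J_a}w_a\omega_j$ has the factors reversed, as does ``$W_{J_a}$ fixes $w_a\omega_i$''.)

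Restriction says nothing about the \emph{coefficients} $c_{j,w_a\nu}=\sum_k|\{\Phi(1)v_{\omega_j},\bar w_a v_{\nu,k}\}|^2$. These are matrix coefficients of the full-group element $\Phi(1)=\psi_{q_\mu}$ twisted by $\bar w_a$. No branching argument forces them to be the coefficients of a Levi canonical element for $(w_a^{-1}q_\mu)|_{\fg_{J_a}}$. So ``the surviving terms assemble into the \cite{IMRN} formula for $\fg_{J_a}$'' is an unproved assertion, and it is precisely the content of the claim that $v^{(a)}$ is the explicit Levi--Toda solution. Your Part (4) applies \cite{KLNW} to that solution, so it inherits the gap.

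\textbf{The missing ingredient} is the factorization $\bar w^{-1}\psi_{q_\mu}=h\,\psi_{w^{-1}q_\mu}\,\widetilde y$, with $h$ in the complex Cartan subgroup and $\widetilde y\in N_+$ (\cite[Proposition~2.19]{IMRN}, following \cite{BKK}). The argument then runs as follows.
\begin{enumerate}
\item Since $\widetilde y$ fixes $v_{\omega_j}$, the factorization gives $c_{j,w\nu}=e^{2\langle\nu,K\rangle}\sum_k|\{\psi_{w^{-1}q_\mu}v_{\omega_j},v_{\nu,k}\}|^2$ with $h^*h=e^{2K}$.
\item Replace $\psi_{w^{-1}q_\mu}$ by its image $\psi_J$ under $N_-\to N_{J,-}$. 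The kernel only contributes components with weights outside $\Wt(V^J_{\omega_j})$, so the surviving coefficients are unchanged.
\item Projecting $\psi_{q^{\mathrm{eff}}}^{-1}q^{\mathrm{eff}}\psi_{q^{\mathrm{eff}}}=q^{\mathrm{eff}}-f$, with $q^{\mathrm{eff}}=w^{-1}q_\mu$, shows that $\psi_J$ is the canonical Levi element for $q_J$.
\item Split $K$ and $q^{\mathrm{eff}}$ into Levi and central parts. The central factors, including the one-term sums for $j\notin J$, cancel in $\sum_j a_{ij}\log(\cdot)$ for $i\in J$, because $\alpha_i$ vanishes on $\fz_{J,0}$.
\end{enumerate}
This is what produces the Levi Cartan parameter $K^J$, which your outline never accounts for. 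You also need $\langle w_a\alpha_i,q_\mu\rangle>0$ for $i\in J_a$. It follows from $w_a^{-1}X_H(t)=w_a^{-1}X_*+(t-t_a)w_a^{-1}q_\mu\in C_0$ as $t\downarrow t_a$.

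\textbf{What you can prove without the factorization.} The equation, the strengths and the masses can all be obtained from tools you already have. Let $P_j(\rho)=\sum_{\nu\in\Wt(V^{J_a}_{\omega_j})}c_{j,w_a\nu}\rho^{-2\langle\nu,w_a^{-1}q_\mu\rangle}$ be the exposed sum. The exact identity $\Delta\log e^{-U_j}=4e^{u_j}$ on $\C^*$ passes to the limit as $\Delta\log P_j=4e^{v_j}$ for $j\in J_a$. Hence $\frac1\pi\int_{\R^2}e^{v_j}=\frac12[\rho\partial_\rho\log P_j]_{0^+}^{\infty}=\langle\omega_j-\kappa_{J_a}\omega_j,w_a^{-1}q_\mu\rangle$. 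The last equality holds because the extreme exponents of $P_j$ are attained at the extremal weights $w_a\omega_j$ and $w_a\kappa_{J_a}\omega_j=w_{a-1}\omega_j$, whose coefficients are positive. Only the exact identification of $v^{(a)}$ as the explicit Levi--Toda solution needs the Gauss factorization.
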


The scales are ordered from larger to smaller radii:
$r_\lambda^{t_{a+1}}/r_\lambda^{t_a}\to0$.
Thus the bubbles indexed by $b>a$ lie inside the $a$th scale.
In actual computations, we start with $w_N=\Id$ and work backward,
determining successively $J_N,w_{N-1},\ldots,J_1,w_0$.
The ray $H+tq_\mu$ records the complete bubbling process.
Its chamber interiors correspond to fast boundaries, its face
crossings determine the bubbling scales and the surviving Levi-Toda
subsystems, and its successive chamber labels record the masses
already enclosed.  The factorization
\(
\tau=\kappa_{J_N}\cdots\kappa_{J_1}
\)
is therefore realized analytically by the successive Toda bubbles,
while the Weyl-group mass formula is the resulting telescoping sum.

The paper is organized as follows. 
Section~\ref{sec:crossings} establishes the chamber-crossing description.
Section~\ref{sec:weights} derives the weight sums and the unified
asymptotic expansion, including the normalized Levi-Toda identification.
Section~\ref{sec:mass} establishes the mass identities and the
uniform concentration bound. Sections~\ref{sec:fast} and~\ref{sec:slow} prove the decay
statements and account for the masses. Section~\ref{sec:A2} gives explicit
$A_2$ examples.

\medskip
\noindent{\bf Acknowledgment of AI assistance.}
As a Plus user, the author used ChatGPT for mathematical discussion, assistance with
checking computations, and editorial suggestions during the preparation
of this paper. The author is responsible for its mathematical content
and final presentation. 

\section{Weyl-chamber crossings}\label{sec:crossings}

We use the standard root-system and Coxeter-group conventions of
\cite{HumphreysCoxeter}. First recall our setup of $H\in \tau C_0$ in \er{def H} and $q_\mu\in C_0$ defined by \er{def q}.  

\begin{proof}[Proof of Part~\textup{(1)} of Theorem~\ref{thm:main}]
For \(\alpha\in\Phi^+\), its root hyperplane is 
$$
\al^\perp = \{X\in \fh_0:\la \al, X\ra = 0\}.
$$
The ray \(X_H(t)=H+tq_\mu\) meets
\(\alpha^\perp\) at
\beq\lb{def ta}
t_\alpha
=
-\frac{\langle\alpha,H\rangle}
       {\langle\alpha,q_\mu\rangle},
\eeq
noting $\la \alpha, q_\mu\ra >0$ by \er{def q}. 
Since \(H=\tau H_0\) and \(H_0\in C_0\),
\[
t_\alpha>0
\iff
\langle\tau^{-1}\alpha,H_0\rangle<0
\iff
\tau^{-1}\alpha\in\Phi^-.
\]
Thus the hyperplanes crossed by the ray $X_H(t)$ for \(t>0\) are precisely
\beq\lb{cross}
\left\{\alpha^\perp:\alpha\in\Inv(\tau)\right\},
\qquad
\Inv(\tau)
=
\{\alpha\in\Phi^+:\tau^{-1}\alpha\in\Phi^-\}.
\eeq
In particular, there are only finitely many crossing times, and the ray doesn't lie in any root hyperplane. 

Denote their distinct values by
\(
0<t_1<\cdots<t_N.
\)

Between two consecutive crossing times, \(X_H(t)\) lies in a unique
open Weyl chamber by 
\[
\fh_0
\setminus
\bigcup_{\alpha\in\Phi^+}\alpha^\perp
=
\bigsqcup_{w\in W} wC_0;
\]
 this defines \(w_a\) by
\[
X_H(t)\in w_aC_0,
\qquad t_a<t<t_{a+1}.
\]
Since \(X_H(0)=\tau H_0\in\tau C_0\), we have \(w_0=\tau\).
Moreover, \(q_\mu\in C_0\), and hence \(X_H(t)\in C_0\) for all
sufficiently large \(t\). Therefore \(w_N=\Id\).

Fix a crossing time \(t_a\). By \er{def w_a}, $X_H(t)$ lies in $w_{a-1}C_0$ immediately before $t_a$, and $w_a C_0$ immediately after $t_a$. 
By \er{fj}, the boundary of $\ol C_0$ is stratified as 
$$
\p \ol{C_0} = \bigsqcup_{\varnothing\ne J\subset\{1,\ldots,n\}} \cf_J.
$$
There is a unique nonempty
\(J_a\subset\{1,\ldots,n\}\) such that
\[
X_H(t_a)\in w_a\mathcal F_{J_a}.
\]
We claim that
\[
w_{a-1}=w_a\kappa_{J_a}.
\]
Indeed, set
\[
Y(t)=w_a^{-1}X_H(t).
\]
For \(t>t_a\) sufficiently close to \(t_a\), we have \(Y(t)\in C_0\),
while \(Y(t_a)\in\mathcal F_{J_a}\). Hence, for every
\(\beta\in\Phi_{J_a}^+\),
\[
\langle\beta,Y(t_a)\rangle=0,
\qquad
\langle\beta,Y(t)\rangle>0\quad (t>t_a).
\]
Since \(\langle\beta,Y(t)\rangle\) is linear in \(t\), it is negative
for \(t<t_a\) sufficiently close to \(t_a\). 
On the other hand, if
\(\beta\in\Phi^+\setminus\Phi_{J_a}^+\), then
\(
\langle\beta,Y(t_a)\rangle>0,
\)
so it remains positive for sufficiently close $t$ on both sides of \(t_a\). 
Thus the positive
roots whose signs are reversed at the crossing are exactly
\(\Phi_{J_a}^+\).

Since
\[
\Inv(\kappa_{J_a})=\Phi_{J_a}^+,
\]
the chamber occupied by \(Y(t)\) immediately before \(t_a\) is
\(\kappa_{J_a}C_0\). Applying $w_{a}$, we see that 
\[
w_{a-1}=w_a\kappa_{J_a}.
\]
Applying this successively and using \(w_0=\tau\) and \(w_N=\Id\), we
obtain
\[
\tau=\kappa_{J_N}\cdots\kappa_{J_1}.
\]

At the $a$th crossing, the crossed hyperplanes are precisely those of
$w_a\Phi_{J_a}^+$. We claim 
$$
w_a\Phi_{J_a}^+\subset \Phi^+.
$$
Indeed, for $\alpha\in w_a\Phi_{J_a}^+$, we have 
$$
\la \alpha, X_H(t)\ra = \la \alpha, H+tq_\mu\ra
$$
changes from negative to positive as $t$ increases past $t_a$. Therefore, 
$$
\la \alpha, q_\mu\ra >0,
$$
and $\alpha\in \Phi^+$ by \er{def q}. 
By \er{cross}, these sets partition $\Inv(\tau)$. Therefore, by \cite[Section~1.7]{HumphreysCoxeter}, 
\[
 \ell(\tau)=|\Inv(\tau)|=\sum_a|\Phi_{J_a}^+|
 =\sum_a\ell(\kappa_{J_a}),
\]
which proves length-additivity \er{leng+}. 

For distinct positive roots $\alpha,\beta$, equality of their
crossing times \er{def ta} imposes a proper linear equation on $H$, since
$\alpha/\langle\alpha,q_\mu\rangle$ and
$\beta/\langle\beta,q_\mu\rangle$ are distinct.
Thus, if $H$ lies outside a finite union of hyperplanes in $\tau C_0$,
all crossing times are distinct.
Hence only one root
hyperplane is crossed at each \(t_a\), so \(J_a=\{j_a\}\) and
\(\kappa_{J_a}=s_{j_a}\). Moreover,
\[
N=|\Inv(\tau)|=\ell(\tau),
\]
the length of $\tau$. 
Consequently,
\[
\tau=s_{j_N}\cdots s_{j_1}
\]
is a reduced expression.
\end{proof}

\begin{remark}[Variation of the bubbling pattern]
For fixed $q_\mu$ and $\tau$, the ordering of the linear functions
$t_\alpha(H)=-\langle\alpha,H\rangle/\langle\alpha,q_\mu\rangle$,
$\alpha\in\Inv(\tau)$, partitions $\tau C_0$ into regions with
fixed crossing order. Their boundaries are given by equal crossing
times. By Theorem~\ref{thm:main}, these regions describe the
separated bubbling patterns, while simultaneous crossings on their
boundaries yield the corresponding Levi bubbles.
\end{remark}

\section{Explicit Toda families and asymptotic expansions}\label{sec:weights}

This section is the technical core of the paper. We introduce the
family $u^{\lambda,H}(r)$ from \cite{IMRN} and its exact weight-sum formulas.
We then derive its asymptotics as $\lambda\to\infty$ in one
statement for $H\in w\mathcal F_J$, including chamber interiors
when $J=\varnothing$. The surviving weights form representations
of the corresponding Levi subalgebra.
We identify the limiting active profile exactly with an explicit
solution of the Levi--Toda subsystem, including its Cartan parameter
and effective singularity parameter.

\subsection{The exact formulas}
Let $G_\C$ be the simply connected complex group with Lie algebra
$\fg$, and let $V_{\omega_i}$ be its $i$th fundamental representation.
Equip it with a Hermitian form $\{\, ,\,\}$, complex linear in the first
argument and invariant under the compact real form $\ck$, and choose a
unit highest-weight vector $v_{\omega_i}$. 
Fix compatible Chevalley generators with
$[e_{\alpha_i},e_{-\alpha_i}]=h_{\alpha_i}$.
In each fundamental representation, their actions satisfy
$e_{\alpha_i}^*=e_{-\alpha_i}$, where $*$ denotes the adjoint
with respect to the chosen Hermitian form.
Let $N_\pm$ be the positive and negative maximal
unipotent subgroups. The construction in \cites{KLNW,IMRN}, based on
\cite{Kostant}, uses the lower-unipotent curve
\[
 \Phi^{-1}\Phi_z=\sum_{i=1}^n z^{\gamma_i}e_{-\alpha_i},
 \qquad \lim_{z\to0}\Phi(z)=\Id
\]
on the slit plane. Although the curve can be multivalued around zero,
the norm formulas below are radial and single-valued.
For $H\in\fh_0$ and $\lambda>0$, set
\begin{equation*}
 T_i^{\lambda,H}(z)
 =\|e^{\lambda H}\Phi(z)v_{\omega_i}\|^2.
\end{equation*}
Writing $A^{-1}=(a^{ij})$ and $\gamma^i=\sum_j a^{ij}\gamma_j$,
the explicit solutions are
\begin{equation}\label{eq:Ui-ui}
\begin{split}
 U_i^{\lambda,H}&=2\gamma^i\log|z|-\log T_i^{\lambda,H},\\
 u_i^{\lambda,H}=\sum_j a_{ij}U_j^{\lambda,H} &= 2\gamma_i\log |z| - \sum_{j} a_{ij}\log T^{\ld, H}_j.
 \end{split}
\end{equation}
They are defined on $\C^*$ and satisfy \eqref{Toda} on the whole
plane, with finite total masses. We study their concentration in $B_2(0)$.
Since $\Phi(z)\to\Id$ at zero, $T_i^{\lambda,H}(0)
=e^{2\lambda\langle\omega_i,H\rangle}$. Consequently, the regular part \er{reg part} has 
\begin{equation}\lb{reg0}
 \hat u_i^{\lambda,H}(0)=-2\lambda\sum_j a_{ij}\langle\omega_j,H\rangle =-2\lambda\langle\alpha_i,H\rangle,
\end{equation}
by \er{raw}. 

For $\tau\ne\Id$ in the Weyl group $W$, $H\in \tau C_0$ has a negative simple-root evaluation, and
$\hat u_i^{\lambda,H}(0)=-2\lambda\langle\alpha_i,H\rangle\to \infty$
for such $i$ as $\ld\to \infty$.
Thus we have a blowup family as in \eqref{blowup}. The main result of \cite{IMRN} is that for $H\in \tau C_0$, we have \er{eq:endpoint-intro}, that is, the blowup mass of the family $u^{\lambda, H}$ is the same as the Weyl-group mass of $\tau$. 

Now we further study the solution family to analyze its blowup picture in more detail. We refer to \cite{Knapp} for the representation theory used below. 

Let $\Wt(V_{\omega_i})$ denote the weight set of $V_{\omega_i}$, and 
choose an orthonormal weight basis $v_{\eta,k}$ of $V_{\omega_i}$,
where $\eta\in \Wt(V_{\om_i})$ and $k$ accounts for its possible multiplicity. Put
\beq\lb{def c}
 c_{i,\eta}=\sum_k
 |\{\Phi(1)v_{\omega_i},v_{\eta,k}\}|^2\geq0,
 \qquad d_i(\eta)=\langle\omega_i-\eta,q_\mu\rangle.
\eeq
Clearly, $c_{i, \eta}$ is independent of the choice of the orthonormal weight basis. 
Every iterated integral lowering $\omega_i$ to $\eta$ has degree
$d_i(\eta)$. Thus the weight-$\eta$ component of
$\Phi(z)v_{\omega_i}$ is $z^{d_i(\eta)}$ times its value at $1$.
Orthogonality of weight spaces gives, for $r=|z|$,
\begin{equation}\label{eq:T-S}
 T_i^{\lambda,H}(r)
 =\sum_{\eta\in \Wt(V_{\omega_i})} c_{i,\eta}e^{2\lambda\langle\eta,H\rangle}
 r^{2d_i(\eta)}
 =r^{2\langle\omega_i,q_\mu\rangle}S_i^{\lambda,H}(r),
\end{equation}
where
\begin{equation}\label{S sum}
 S_i^{\lambda,H}(r)
 =\sum_{\eta\in \Wt(V_{\omega_i})} c_{i,\eta}e^{2\lambda\langle\eta,H\rangle}
 r^{-2\langle\eta,q_\mu\rangle}.
\end{equation}
For $w\in W$, $w\omega_i\in \Wt(V_{\om_i})$, and is called an extremal weight. 
Every extremal weight has multiplicity one, and the nonvanishing
argument in \cite[eq. (3.5)]{IMRN} gives
\begin{equation}\label{eq:extremal-nonzero}
 c_{i,w\omega_i}>0\qquad(w\in W).
\end{equation}
Since $\sum_j a_{ij}\omega_j=\alpha_i$ as in \er{raw} and
$\langle\alpha_i,q_\mu\rangle=1+\gamma_i$ as in \er{def q}, we obtain the exact identity for \er{def f}
\begin{equation}\label{eq:cylindrical-exact}
\begin{split}
 F_i^{\lambda,H}(r)&=u_i^{\lambda,H}(r)+2\log r\\
&= 2\gamma_i \log r - \sum_j a_{ij} \lp 2\la \om_j, q_\mu\ra\log r + \log S_j^{\lambda, H}(r)\rp + 2\log r\\
&= 2\big(\gamma_i - \la \al_i, q_\mu\ra\big)\log r - \sum_j a_{ij}\log S_j^{\lambda, H}(r) + 2\log r\\
&=- \sum_j a_{ij}\log S_j^{\lambda, H}(r). 
\end{split}
\end{equation}

\subsection{Asymptotics on chambers and faces}
As in \er{fj}, we include chamber interiors by allowing $J=\varnothing$, so that
$\mathcal F_{\varnothing}=C_0$. 

Consider the representation $V_{\om_j}$ as a module over $\fg_J$,
the semisimple part of $\fl_J$ \er{lg}, and 
let $V^J_{\omega_j}$ be the submodule generated by $v_{\omega_j}$. 

The highest weight $\om_j$ restricts to a fundamental weight
of the Levi algebra if $j\in J$, and to zero otherwise; for
$J=\varnothing$ the module is one-dimensional and generated by the highest-weight vector.
The following lemma extends the interior observation in
\cite[(3.2)]{IMRN} to all faces.

\begin{lemma}[The exposed weight face]\label{lem:face-weights}
Fix $w\in W$, $J\subset\{1,\ldots,n\}$, and let
$H\in w\mathcal F_J$ as in \er{fj}. Then
\begin{gather}
 \langle w\omega_j,H\rangle\geq \langle\eta,H\rangle,
 \qquad \forall\eta\in\Wt(V_{\omega_j}),\lb{max}\\
 \mathcal E_j(H)
 :=\{\eta\in\Wt(V_{\omega_j}):
 \langle\eta,H\rangle=\langle w\omega_j,H\rangle\}
 =w\Wt(V^J_{\omega_j}).\lb{id wts}
\end{gather}
More precisely, choose a representative $\bar w$ of $w$ in $\ck$, the compact real form of $G_\bc$, acting on
$V_{\omega_j}$. For every $\nu\in \Wt(V^J_{\omega_j})$, 
\begin{equation}\label{eq:face-weight-spaces}
 V_{\omega_j}[w\nu]
 =\bar w\bigl(V^J_{\omega_j}[\nu]\bigr),
\end{equation}
where brackets denote weight spaces for the ambient Cartan algebra.

In particular,
\begin{equation}\label{eq:inactive-face}
 \mathcal E_j(H)=\{w\omega_j\}\quad\text{if }j\notin J.
\end{equation}
If $J=\{i\}$, then
$\mathcal E_i(H)=\{w\omega_i,w(\omega_i-\alpha_i)\}$.
\end{lemma}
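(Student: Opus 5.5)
First reduce to $w=\Id$ using Weyl invariance of the weight set. Set $X=w^{-1}H\in\mathcal F_J\subset\overline{C_0}$. Since $\Wt(V_{\omega_j})$ is $W$-stable with $W$-invariant multiplicities, we have $\langle\eta,H\rangle=\langle w^{-1}\eta,X\rangle$. Hence \eqref{max} and \eqref{id wts} reduce to the statements
\[
\langle\omega_j,X\rangle\ge\langle\nu,X\rangle\ \ \forall\nu\in\Wt(V_{\omega_j}),\qquad \{\nu:\langle\nu,X\rangle=\langle\omega_j,X\rangle\}=\Wt(V^J_{\omega_j}).
\]
For the inequality I will use that every weight has the form $\nu=\omega_j-\sum_i m_i\alpha_i$ with $m_i\in\mathbb Z_{\ge0}$. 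Then $\langle\omega_j-\nu,X\rangle=\sum_i m_i\langle\alpha_i,X\rangle\ge0$, because $X\in\overline{C_0}$. Equality holds exactly when $m_i=0$ for all $i\notin J$, since $\langle\alpha_i,X\rangle>0$ for $i\notin J$ and $=0$ for $i\in J$. So the exposed set is $\{\nu\in\Wt(V_{\omega_j}):\omega_j-\nu\in\operatorname{span}_{\mathbb Z_{\ge0}}\{\alpha_i:i\in J\}\}$.

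The main step is to show that this set equals $\Wt(V^J_{\omega_j})$, and more precisely that the weight spaces agree:
\[
V_{\omega_j}[\nu]=V^J_{\omega_j}[\nu]\quad\text{for }\omega_j-\nu\in\mathbb Z_{\ge0}J.
\]
One inclusion is immediate, because $V^J_{\omega_j}=U(\fg_J)v_{\omega_j}$ is spanned by $e_{-\alpha_{i_1}}\cdots e_{-\alpha_{i_k}}v_{\omega_j}$ with $i_l\in J$, and these vectors have weights of this form. For the converse, I will use that $V_{\omega_j}=U(\fn_-)v_{\omega_j}$. By PBW, or simply because $\fn_-$ is generated by the $e_{-\alpha_i}$, the space $V_{\omega_j}[\nu]$ is spanned by monomials $e_{-\alpha_{i_1}}\cdots e_{-\alpha_{i_k}}v_{\omega_j}$ with $\sum_l\alpha_{i_l}=\omega_j-\nu$. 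Since the simple roots are linearly independent, each $i_l$ must lie in $J$, so the monomial lies in $V^J_{\omega_j}$. Thus the weight spaces coincide, and in particular their weight sets coincide.

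Transporting back by $w$ then gives \eqref{eq:face-weight-spaces}. The representative $\bar w\in\ck$ maps $V_{\omega_j}[\nu]$ isomorphically onto $V_{\omega_j}[w\nu]$, since $\mathrm{Ad}(\bar w)$ acts on $\fh$ as $w$. Combining this with $V_{\omega_j}[\nu]=V^J_{\omega_j}[\nu]$ gives \eqref{eq:face-weight-spaces}, and \eqref{id wts} follows.

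The special cases are then direct. If $j\notin J$, then $e_{-\alpha_i}v_{\omega_j}=0$ for all $i\in J$, because $\langle\omega_j,h_{\alpha_i}\rangle=0$ and $v_{\omega_j}$ is highest weight. So $V^J_{\omega_j}=\C v_{\omega_j}$, which gives \eqref{eq:inactive-face}. If $J=\{i\}$ with $j=i$, then $V^J_{\omega_i}$ is the $\mathfrak{sl}_2$-string through $\omega_i$ of length $\langle\omega_i,\alpha_i^\vee\rangle+1=2$, so its weights are $\omega_i$ and $\omega_i-\alpha_i$. The only real obstacle is the spanning argument in the previous paragraph, and it is routine once $V_{\omega_j}=U(\fn_-)v_{\omega_j}$ is invoked.
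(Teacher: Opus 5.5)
Your proposal is correct and matches the paper's proof essentially step for step: you pass to $w^{-1}H\in\mathcal F_J$ and get the inequality and its equality case from the nonnegative simple-root expansion of $\omega_j-\nu$. You then identify $V_{\omega_j}[\nu]=V^J_{\omega_j}[\nu]$ using spanning words $e_{-\alpha_{i_1}}\cdots e_{-\alpha_{i_k}}v_{\omega_j}$ and linear independence of the simple roots, transport by $\bar w$, and treat the two special cases exactly as the paper does.
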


\begin{proof}
Put $Y=w^{-1}H\in\mathcal F_J$. For any
$\eta\in\Wt(V_{\omega_j})$, write
\[
 \omega_j-w^{-1}\eta=\sum_i b_i\alpha_i,
 \qquad b_i\in\mathbb Z_{\ge0}.
\]
By \eqref{fj}, $\langle\alpha_i,Y\rangle$ vanishes for $i\in J$
and is positive for $i\notin J$. Hence
\begin{gather}
 \langle\omega_j-w^{-1}\eta,Y\rangle\ge0,\notag\\
 \langle\omega_j-w^{-1}\eta,Y\rangle=0
 \quad\Longleftrightarrow\quad
 \omega_j-w^{-1}\eta
 \in\sum_{i\in J}\mathbb Z_{\ge0}\alpha_i.\lb{equal}
\end{gather}
The first inequality and Weyl invariance of the pairing gives \eqref{max}.

We now identify the corresponding weight spaces. By the
Poincar\'e--Birkhoff--Witt theorem and the highest-weight property,
\beq\lb{pull down}
 V_{\omega_j}=U(\mathfrak n_-)v_{\omega_j},
 \qquad
 V^J_{\omega_j}=U(\mathfrak n_-^J)v_{\omega_j},
\eeq
where $\fn_-$ and $\fn_-^J$ are the negative nilpotent Lie
subalgebras of $\fg$ and $\fg_J$, respectively, and $U$ denotes
the universal enveloping algebra.
Since $\mathfrak n_-$ is generated by the negative simple-root
vectors $f_i=e_{-\alpha_i}$, the weight space $V_{\omega_j}[\nu]$
is spanned by the nonzero words
\beq\lb{act}
 f_{i_1}\cdots f_{i_m}v_{\omega_j},
 \qquad
 \alpha_{i_1}+\cdots+\alpha_{i_m}=\omega_j-\nu.
\eeq

We now show that
\beq\lb{spell}
\Wt(V_{\om_j}^J)
=\left\{\nu\in\Wt(V_{\om_j}):
\omega_j-\nu\in\sum_{i\in J}\mathbb Z_{\ge0}\alpha_i\right\}.
\eeq
The $\subset$ direction is clear.
For the other direction, \er{act} expresses $\om_j-\nu$ as
a sum of simple roots $\al_{i_r}$, while the condition on the
right-hand side of \er{spell} expresses it using only simple
roots indexed by $J$. Linear independence of the simple roots
forces every $i_r$ to belong to $J$. By \er{pull down},
such a $\nu$ belongs to $\Wt(V_{\om_j}^J)$.
\er{equal} and \er{spell} then prove \er{id wts}.

Furthermore, every spanning word in \er{act} for such a $\nu\in \Wt(V_{\om_j}^J)$
uses only indices in $J$, so
\beq\lb{id wt sp}
 V_{\omega_j}[\nu]=V^J_{\omega_j}[\nu].
\eeq
Then
\[
V_{\om_j}[w\nu]
=\bar w(V_{\om_j}[\nu])
=\bar w(V_{\om_j}^J[\nu]),
\]
proving \er{eq:face-weight-spaces}.

If $j\notin J$, each $f_i$ with $i\in J$ annihilates
$v_{\omega_j}$, since
$\langle\omega_j,\alpha_i^\vee\rangle=0$.
Hence $V^J_{\omega_j}$ is the highest-weight line, proving
\eqref{eq:inactive-face}. If $J=\{i\}$ and $j=i$, then
$\langle\omega_i,\alpha_i^\vee\rangle=1$, so the module
$V^{\{i\}}_{\omega_i}$ has precisely the two weights
$\omega_i$ and $\omega_i-\alpha_i$.
\end{proof}

Set
\begin{equation}\label{def P}
 P_j^H(r)=\sum_{\eta\in\mathcal E_j(H)}
 c_{j,\eta}r^{-2\langle\eta,q_\mu\rangle}.
\end{equation}
These sums never vanish: by \eqref{eq:extremal-nonzero},
\begin{equation}\label{def P-positive}
 P_j^H(r)\geq c_{j,w\omega_j}
 r^{-2\langle w\omega_j,q_\mu\rangle}>0\qquad(r>0).
\end{equation}

The next proposition 
is our main asymptotic result. 

\begin{proposition}[Asymptotics on chambers and faces]\label{bound asym}
For $H\in w\mathcal F_J$, with $J$ possibly empty, the exact formula is
\begin{equation}\label{eq:boundary-factor}
 S_j^{\lambda,H}(r)=e^{2\lambda \la w\omega_j, H\ra }P_j^H(r)
 (1+E_j^{\lambda,H}(r)),
\end{equation}
where
\[
 E_j^{\lambda,H}(r)=\frac{1}{P_j^H(r)}
 \sum_{\eta\notin\mathcal E_j(H)}c_{j,\eta}
 e^{-2\lambda( \la w\omega_j, H\ra-\langle\eta,H\rangle)}
 r^{-2\langle\eta,q_\mu\rangle}.
\]
Consequently,
\begin{equation}\label{eq:boundary-F}
 F_i^{\lambda,H}(r)
 =-2\lambda\langle\alpha_i,w^{-1}H\rangle
 -\sum_j a_{ij}\log P_j^H(r)+O_{\cc}(e^{-\delta\lambda}),
\end{equation}
where $\delta>0$ is independent of $r$. 
The estimate holds with all $D=r\partial_r$-derivatives on any compact
$\cc\Subset(0,\infty)$. 
\end{proposition}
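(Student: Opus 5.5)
The plan is to split the weight sum \eqref{S sum} according to Lemma~\ref{lem:face-weights} and then substitute into the exact identity \eqref{eq:cylindrical-exact}.

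\emph{Step 1: the factorization.} Write $S_j^{\lambda,H}(r)$ as the sum over $\eta\in\mathcal E_j(H)$ plus the sum over $\eta\notin\mathcal E_j(H)$. By \eqref{id wts}, every $\eta\in\mathcal E_j(H)$ has $\langle\eta,H\rangle=\langle w\omega_j,H\rangle$. So the first sum is exactly $e^{2\lambda\langle w\omega_j,H\rangle}P_j^H(r)$. Factoring this out of the second sum gives $E_j^{\lambda,H}$ as displayed, and \eqref{eq:boundary-factor} is then an identity. Division by $P_j^H(r)$ is legitimate by \eqref{def P-positive}.

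\emph{Step 2: the uniform gap.} Since $\Wt(V_{\omega_j})$ is finite, \eqref{max} and \eqref{id wts} give
\[
\delta_0:=\min_j\min_{\eta\notin\mathcal E_j(H)}\bigl(\langle w\omega_j,H\rangle-\langle\eta,H\rangle\bigr)>0.
\]
Take $\delta=2\delta_0$, which does not depend on $r$. On a compact set $\mathcal C\Subset(0,\infty)$ the finitely many powers $r^{-2\langle\eta,q_\mu\rangle}$ are bounded above. By \eqref{def P-positive}, $P_j^H(r)$ is bounded below there by a positive constant. Hence
\[
0\le E_j^{\lambda,H}(r)\le C_{\mathcal C}e^{-\delta\lambda}.
\]

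\emph{Step 3: the expansion of $F_i$.} Taking logarithms and inserting into \eqref{eq:cylindrical-exact} gives
\[
F_i^{\lambda,H}=-2\lambda\sum_j a_{ij}\langle w\omega_j,H\rangle-\sum_j a_{ij}\log P_j^H-\sum_j a_{ij}\log(1+E_j^{\lambda,H}).
\]
The first term equals $-2\lambda\langle w\alpha_i,H\rangle=-2\lambda\langle\alpha_i,w^{-1}H\rangle$, using \eqref{raw} and $W$-invariance of the pairing. The last term is $O_{\mathcal C}(e^{-\delta\lambda})$ because $\log(1+x)\le x$ for $x\ge0$. This proves \eqref{eq:boundary-F}.

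\emph{Step 4: the derivatives.} This is the only point needing care. Apply $D=r\partial_r$ to each monomial: it gives $D^m r^{-2\langle\eta,q_\mu\rangle}=(-2\langle\eta,q_\mu\rangle)^m r^{-2\langle\eta,q_\mu\rangle}$. So $D^m$ of the numerator of $E_j$ keeps the factor $e^{-\delta\lambda}$ with a $\mathcal C$-dependent constant. Likewise $D^mP_j^H/P_j^H$ is bounded on $\mathcal C$, independently of $\lambda$. The quotient rule then shows $D^mE_j^{\lambda,H}=O_{\mathcal C}(e^{-\delta\lambda})$. Faà di Bruno applied to $\log(1+x)$, with $x=E_j$ small and all derivatives bounded, gives $D^m\log(1+E_j^{\lambda,H})=O_{\mathcal C,m}(e^{-\delta\lambda})$. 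The leading term $-2\lambda\langle\alpha_i,w^{-1}H\rangle$ is constant in $r$, so it does not interfere. I expect no real obstacle: the substance lies in the strict gap $\delta_0>0$, which Lemma~\ref{lem:face-weights} already supplies.
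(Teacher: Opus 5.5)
Your proposal is correct and follows essentially the same route as the paper: you isolate the exposed weights $\mathcal E_j(H)$ via Lemma~\ref{lem:face-weights}, factor out $e^{2\lambda\langle w\omega_j,H\rangle}P_j^H$, bound the remainder using a uniform gap together with the positivity \eqref{def P-positive}, and substitute into \eqref{eq:cylindrical-exact}. The only differences are minor. The paper reads off the explicit gap $\langle w\omega_j,H\rangle-\langle\eta,H\rangle\ge\min_{i\notin J}\langle\alpha_i,w^{-1}H\rangle$ from the simple-root expansion in the lemma's proof, whereas you get positivity of the gap from finiteness of the weight set. The paper also treats $J=\{1,\ldots,n\}$ separately: there the remainder vanishes identically, a case in which your minimum is taken over an empty set and which you should mention.
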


\begin{proof}
In \er{S sum}, by \er{max} and \er{id wts}, we see that the terms with the maximal exponential factor sum to 
$$
e^{2\lambda \la w\omega_j, H\ra }P_j^H(r).
$$
Factoring this out gives \er{eq:boundary-factor}.

If $J\ne\{1,\ldots,n\}$, put
$\delta=\min_{i\notin J}\langle\alpha_i,w^{-1}H\rangle>0$.
For $\eta\notin\mathcal E_j(H)$, the proof of
Lemma~\ref{lem:face-weights} gives
\[
\langle w\omega_j,H\rangle-\langle\eta,H\rangle\geq\delta.
\]
If $J=\{1,\ldots,n\}$, no weights are omitted and the remainder
vanishes identically; any $\delta>0$ may be used.

\eqref{def P-positive} bounds the denominator away from zero
on compact annuli. This proves exponential decay of $E_j$ and all
its derivatives there. Substitution in
\eqref{eq:cylindrical-exact} gives \eqref{eq:boundary-F}; its exact
error is $-\sum_j a_{ij}\log(1+E_j^{\lambda,H})$.
\end{proof}

For $J=\varnothing$, the face sums reduce to
$P_j^H(r)=c_{j,w\omega_j}r^{-2\langle w\omega_j,q_\mu\rangle}$.
Thus Proposition~\ref{bound asym} also gives the interior expansion
\begin{equation*}
 F_i^{\lambda,H}(r)
 =-2\lambda\langle\alpha_i,w^{-1}H\rangle
 +2\langle w\alpha_i,q_\mu\rangle\log r
 -\sum_j a_{ij}\log c_{j,w\omega_j}
 +O_{\cc}(e^{-\delta\lambda}).
\end{equation*}

When $\lambda\to \infty$, we look at the asymptotics of $F_i^{\lambda, H}(r)$. 
If $H\in w\cf_J$, then by \er{fj} and in \er{eq:boundary-F}, we have
\begin{equation}\lb{good lim}
F_i^{\lambda, H}(r)\longrightarrow
\begin{cases}
-\infty, & i\notin J,\\
-\sum_j a_{ij} \log P_j^H(r), & i\in J.
\end{cases}
\end{equation}

\subsection{Identification of the face sums with Levi sums}

Next, we identify the finite limits in \er{good lim} with the Levi version of \er{eq:cylindrical-exact}, including their coefficients. 

We use the decomposition
\begin{equation}\label{eq:levi-cartan-split}
 \fh_0=\fh_{J,0}\oplus\fz_{J,0},
\end{equation}
where 
\beq\lb{hz}
 \fh_{J,0}=\operatorname{span}_{\R}\{h_{\alpha_i}:i\in J\},\quad
 \fz_{J,0}=\{Z\in\fh_0:\langle\alpha_i,Z\rangle=0,\ i\in J\}.
\eeq
(The second summand is the real form of the center \er{cx ct}.)

All weights in $\wtjj$ differ by integral combinations of the $\al_i$ for $i\in J$, so 
 have the same restriction
$\omega_j|_{\fz_{J,0}}$ to the center, by \er{hz}. 

Put $f=\sum_{i=1}^n e_{-\alpha_i}$. For regular $q\in\fh_0$,
let $\psi_q\in N_-$ be the unique element satisfying
$\psi_q^{-1}q\psi_q=q-f$; see \cite[Lemma~3.5.1]{Kostant}.
For the Levi algebra $\fg_J$, use the same construction with
$f_J=\sum_{i\in J}e_{-\alpha_i}$. For regular $q_J\in\fh_{J,0}$
and $K^J\in\fh_{J,0}$, the Levi sums
$S_{j,J}^{1,K^J}(r;q_J)$ are defined by the weight-sum formula
\er{S sum} for $V^J_{\omega_j}$, using the canonical element
for $q_J$ in place of $\Phi(1)$. For $j\notin J$, set
$S_{j,J}^{1,K^J}(r;q_J)=1$.

\begin{proposition}\label{prop:P-S}
Let $H\in w\mathcal F_J$, put 
\begin{equation}\lb{had it}
q^{\mathrm{eff}}=w^{-1}q_\mu=q_J+q_Z
\end{equation}
according to
\eqref{eq:levi-cartan-split}. There exist $K^J\in\fh_{J,0}$ and
$K_Z\in\fz_{J,0}$ such that, for $r>0$ and $1\le j\le n$,
\[
P_j^H(r)=e^{2\langle\omega_j,K_Z\rangle}
 r^{-2\langle\omega_j,q_Z\rangle}S_{j,J}^{1,K^J}(r;q_J).
\]
Consequently, for $i\in J$,
\[
\sum_{j=1}^n a_{ij}\log P_j^H(r)
=\sum_{j\in J}a_{ij}\log S_{j,J}^{1,K^J}(r;q_J).
\]
The parameter $q_J$ is regular; its dominance at a crossing will be
established in \eqref{eq:active-positive}. 
\end{proposition}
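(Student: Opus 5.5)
The plan is to compute $P_j^H(r)$ term by term using Lemma~\ref{lem:face-weights}, and to separate the $r$-exponents from the coefficients $c_{j,\eta}$. By \eqref{id wts}, every $\eta\in\mathcal E_j(H)$ has the form $\eta=w\nu$ with $\nu\in\wtjj$. Then
\[
\langle w\nu,q_\mu\rangle=\langle\nu,w^{-1}q_\mu\rangle=\langle\nu,q_J\rangle+\langle\nu,q_Z\rangle
\]
by \eqref{had it}. Since all $\nu\in\wtjj$ have the same restriction $\omega_j|_{\fz_{J,0}}$ to the center, the last term equals $\langle\omega_j,q_Z\rangle$. This pulls out the factor $r^{-2\langle\omega_j,q_Z\rangle}$ and leaves $\sum_{\nu}c_{j,w\nu}\,r^{-2\langle\nu,q_J\rangle}$. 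That sum has exactly the shape of the Levi weight sum \eqref{S sum} for $V^J_{\omega_j}$ with $q_J$ in place of $q_\mu$. Regularity of $q_J$ in $\fh_{J,0}$ comes from the part~(1) analysis: $\langle\beta,q_J\rangle=\langle w\beta,q_\mu\rangle\neq0$ for $\beta\in\Phi_J$.

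The second step is to identify the coefficients. By \eqref{eq:face-weight-spaces}, $c_{j,w\nu}=\|\pi_\nu(\bar w^{-1}\Phi(1)v_{\omega_j})\|^2$, where $\pi_\nu$ is orthogonal projection onto $V^J_{\omega_j}[\nu]=V_{\omega_j}[\nu]$; here I use that $\bar w\in\ck$ is unitary. So I must show that the projection of $g v_{\omega_j}$, with $g=\bar w^{-1}\Phi(1)$, onto the face $\bigoplus_{\nu\in\wtjj}V_{\omega_j}[\nu]$ equals $e^{\langle\omega_j,K_Z\rangle}e^{K^J}\psi^J_{q_J}v_{\omega_j}$ for suitable $K^J\in\fh_{J,0}$ and $K_Z\in\fz_{J,0}$. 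I plan to use the parabolic decomposition $G_\C\supset P_J^-U_J^+$ on the open cell. Write $g=u_-\,l\,u_+$ with $u_-\in U^-_J$ (the unipotent radical of the opposite parabolic $\fp_J^-$), $l\in L_J$ and $u_+\in N_+$.

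Applied to $v_{\omega_j}$, $u_+$ acts trivially. Moreover $u_-$ only lowers by roots outside $\Phi_J$, so it does not change the face component. Hence the face projection is $l\,v_{\omega_j}$, and $L_J$ preserves $V^J_{\omega_j}$. Next I write $l=e^{K_Z}e^{K^J}n$ with $n$ in the Levi $N^J_-$ up to a compact factor. The central part then contributes $e^{\langle\omega_j,K_Z\rangle}$, and the compact factor is harmless after taking norms.

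The main obstacle is showing that the remaining $N^J_-$-part is exactly the canonical element $\psi^J_{q_J}$, so that $\{\psi^J_{q_J}v,\cdot\}$ gives the Levi coefficients, possibly after absorbing a semisimple factor into $K^J$. For this I would use Kostant's characterization $\psi_q^{-1}q\psi_q=q-f$. Conjugating by $\bar w$ gives $g^{-1}$-type intertwining with $w^{-1}q_\mu=q^{\mathrm{eff}}$. Projecting this identity to the Levi $\fl_J$ via the parabolic factorization, I expect to get $n^{-1}q_J n=q_J-(\text{something in }\fn^J_-\text{ of degree one})$. After a diagonal rescaling in $e^{\fh_{J,0}}$, which is absorbed into $K^J$, the degree-one term becomes $f_J$. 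Uniqueness in \cite[Lemma~3.5.1]{Kostant} then identifies $n$ with $\psi^J_{q_J}$ up to that torus factor. The check that the degree-one coefficients are nonzero should follow from \eqref{eq:extremal-nonzero}.

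Finally, the consequence for $i\in J$ is immediate from the product formula. Taking $\sum_j a_{ij}\log$ of it, the central factors contribute $2\langle\sum_j a_{ij}\omega_j,K_Z\rangle-2\langle\alpha_i,q_Z\rangle\log r$, which vanishes since $\alpha_i\perp\fz_{J,0}$ for $i\in J$. For $j\notin J$ the Levi sums were set equal to $1$, so only $j\in J$ survive.
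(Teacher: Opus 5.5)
\textbf{There is a genuine gap at the step you call the main obstacle.} Your outline has the same architecture as the paper's proof. You rewrite $P_j^H$ over $\Wt(V^J_{\omega_j})$ via Lemma~\ref{lem:face-weights}, express $c_{j,w\nu}$ through $\bar w^{-1}\Phi(1)v_{\omega_j}$ using unitarity of $\bar w$, discard the unipotent part lowering outside $\Phi_J$ (the paper's $n_{\mathrm{out}}$), split the torus factor as $K^J+K_Z$, and cancel central parts using $\langle\alpha_i,\fz_{J,0}\rangle=0$. But identifying the Levi factor with $\psi^J_{q_J}$ is the whole content of the proposition, and you leave it as an expectation.

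The mechanism you propose does not work as stated. Write $g=\bar w^{-1}\psi$ and $g=u_-lu_+$ with $u_\pm$ in the unipotent radicals. Then $g^{-1}q^{\mathrm{eff}}g=q_\mu-f$ becomes $\operatorname{Ad}(l^{-1})\operatorname{Ad}(u_-^{-1})q^{\mathrm{eff}}=\operatorname{Ad}(u_+)(q_\mu-f)$. Its $\fl_J$-component is $\operatorname{Ad}(l^{-1})q^{\mathrm{eff}}=q_\mu-f_J-[X_1,f']$. Here $f'=\sum_{i\notin J}e_{-\alpha_i}$, and $X_1$ is the part of $\log u_+$ of degree one in the simple roots outside $J$. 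The right side starts from $q_\mu$, not $q^{\mathrm{eff}}=w^{-1}q_\mu$. The whole Weyl twist is hidden in the unknown bracket $[X_1,f']$, which has components in $\fh$ and $\fn^J_+$. So this projection controls neither the Cartan part nor the nilpotent part, and no torus rescaling repairs that.

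\textbf{Missing ingredient.} What you need is what the paper imports as \eqref{eq:weyl-gauss} from \cite[Proposition~2.19]{IMRN} and \cite[Proposition~8.1]{BKK}. In the full Gauss decomposition $\bar w^{-1}\psi=h\psi_1\widetilde y$, with $h$ Cartan, $\psi_1\in N_-$ and $\widetilde y\in N_+$, one has exactly $\psi_1=\psi_{q^{\mathrm{eff}}}$. You can prove this directly.
\begin{itemize}
\item The decomposition exists because $|\{gv_{\omega_j},v_{\omega_j}\}|^2=|\{\psi v_{\omega_j},\bar wv_{\omega_j}\}|^2=c_{j,w\omega_j}>0$ by \eqref{eq:extremal-nonzero}. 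You never justified that $g$ lies in the open cell.
\item Then $\psi_1^{-1}q^{\mathrm{eff}}\psi_1=\operatorname{Ad}(\widetilde y)(q_\mu-f)$. The left side lies in $q^{\mathrm{eff}}+\fn_-$. By the height grading, the right side lies in $-f+\fh+\fn_+$. Hence both equal $q^{\mathrm{eff}}-f$.
\item Only now does the Levi projection give the result. It is a homomorphism on the negative Borel that kills $\mathfrak u_J^-$ and the central component, so $\psi_J=\pi_J(\psi_1)$ satisfies $\psi_J^{-1}q_J\psi_J=q_J-f_J$. No rescaling is needed, since the degree $-1$ part is exactly $-f$.
\end{itemize}

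\textbf{Two smaller points.}
\begin{itemize}
\item The ``compact factor'' in your decomposition of $l$ must lie in the compact torus. A general element of $\ck\cap L_J$ mixes weight spaces and changes the individual norms $\|\pi_\nu(\cdot)\|$.
\item Applying Kostant's identity to $\Phi(1)$ requires the separate fact $\Phi(1)=\psi_{q_\mu}$ from \cite[Proposition~2.13]{IMRN}.
\end{itemize}
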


\begin{proof}
Put $\psi=\Phi(1)$. 
By \eqref{def P} and Lemma~\ref{lem:face-weights},
\beq\lb{direct P}
 P_j^H(r)
 =\sum_{\nu\in\Wt(V^J_{\omega_j})}
   c_{j,w\nu}\,r^{-2\langle w\nu,q_\mu\rangle}
   =\sum_{\nu\in\Wt(V^J_{\omega_j})}
   c_{j,w\nu}\,r^{-2\langle\nu,q^{\mathrm{eff}}\rangle}.
\eeq
Choose a compact representative $\bar w$ of $w$ and, for each
$\nu\in\Wt(V^J_{\omega_j})$, an orthonormal basis
$\{v_{\nu,k}\}_k$ of $V^J_{\omega_j}[\nu]$.
By \eqref{eq:face-weight-spaces}, the vectors
$\{\bar wv_{\nu,k}\}_k$ form an orthonormal basis of the entire
ambient weight space $V_{\omega_j}[w\nu]$. Hence
$$
 c_{j,w\nu}
 =\sum_k
   \bigl|\{\psi v_{\omega_j},\bar wv_{\nu,k}\}\bigr|^2
 =\sum_k
   \bigl|\{\bar w^{-1}\psi v_{\omega_j},v_{\nu,k}\}\bigr|^2.
   $$

The algebraic characterization in \cite[Proposition~2.13]{IMRN} gives
$\psi=\Phi(1)=\psi_{q_\mu}$. Since $\bar w$ is compact,
$\bar w^*=\bar w^{-1}$. The proof of \cite[Proposition~2.19]{IMRN},
following \cite[Proposition~8.1]{BKK}, gives
\begin{equation}\label{eq:weyl-gauss}
 \bar w^{-1}\psi=h\psi_1\widetilde y,
 \qquad
 \psi_1=\psi_{q^{\mathrm{eff}}},\quad
 \widetilde y\in N_+,
\end{equation}
where $h$ belongs to the complex Cartan subgroup.
Since $\widetilde yv_{\omega_j}=v_{\omega_j}$, and $h$ acts
on the weight space of weight $\nu$ by the character
$\chi_\nu(h)$, we obtain
\begin{equation}\label{eq:face-coefficients-before-projection}
 c_{j,w\nu}
= |\chi_\nu(h)|^2
   \sum_k\bigl|\{\psi_1v_{\omega_j},v_{\nu,k}\}\bigr|^2
 =e^{2\langle\nu,K\rangle}
   \sum_k\bigl|\{\psi_1v_{\omega_j},v_{\nu,k}\}\bigr|^2,
\end{equation}
where $K\in\mathfrak h_0$ is determined by $h^*h=e^{2K}$ such that $|\chi_\nu(h)|^2=e^{2\langle\nu,K\rangle}$.

We next replace $\psi_1$ by its Levi projection.
The negative root spaces outside $\Phi_J$ form an ideal in
$\mathfrak n_-$. The corresponding quotient homomorphism
\[
 \pi_J:N_-\longrightarrow N_{J,-}
\]
allows us to write
\[
 \psi_J=\pi_J(\psi_1),
 \qquad
 \psi_1=n_{\mathrm{out}}\psi_J,
 \qquad n_{\mathrm{out}}\in\ker\pi_J.
\]
Every nonidentity term in the action of $n_{\mathrm{out}}$
lowers a weight by a sum of positive roots having a nonzero
simple-root coefficient outside $J$. Since every weight of
$\psi_Jv_{\omega_j}$ differs from $\omega_j$ by a combination
of simple roots in $J$, these terms have weights outside
$\Wt(V^J_{\omega_j})$; compare the proof of
Lemma~\ref{lem:face-weights}. Orthogonality of distinct weight
spaces therefore gives
\beq\lb{1j}
 \{\psi_1v_{\omega_j},v_{\nu,k}\}
 =\{\psi_Jv_{\omega_j},v_{\nu,k}\},
 \qquad \nu\in\Wt(V^J_{\omega_j}).
\eeq

Define
\beq\lb{def d}
 d_{j,\nu}
 :=\sum_k\bigl|\{\psi_Jv_{\omega_j},v_{\nu,k}\}\bigr|^2.
\eeq
By \er{1j}, equation \eqref{eq:face-coefficients-before-projection} now becomes
\[
 c_{j,w\nu}=e^{2\langle\nu,K\rangle}d_{j,\nu}.
\]

By \er{eq:weyl-gauss}, 
\[
 \psi_1^{-1}q^{\mathrm{eff}}\psi_1
 =q^{\mathrm{eff}}-f.
\]
Its projection onto the negative Borel algebra of the Levi algebra yields
\begin{equation*}
 \psi_J^{-1}q_J\psi_J=q_J-f_J,
 \qquad f_J=\sum_{i\in J}e_{-\alpha_i}.
\end{equation*}
Here the projection sends $q^{\mathrm{eff}}$ to $q_J$ and
kills the negative root spaces outside $\Phi_J$.
For $\beta\in\Phi_J$, we have
$\langle\beta,q_J\rangle=\la \beta, q^{\mathrm{eff}}\ra = \langle w\beta,q_\mu\rangle\ne0$ since $q_\mu\in C_0$. 
Thus $q_J$ is regular for the Levi root system, and uniqueness
identifies $\psi_J$ as its canonical element.

Consequently, \er{direct P} becomes 
\begin{equation}\label{eq:face-coefficient-sum}
 P_j^H(r)
 =\sum_{\nu\in\Wt(V^J_{\omega_j})}
  d_{j,\nu}\, e^{2\langle\nu,K\rangle} 
   r^{-2\langle\nu,q^{\mathrm{eff}}\rangle}.
\end{equation}

Using \eqref{eq:levi-cartan-split}, write
\begin{equation*}
K=K^J+K_Z,\qquad K^J\in\fh_{J,0},\quad K_Z\in\fz_{J,0}.
\end{equation*}
For every $\nu\in\Wt(V^J_{\omega_j})$ and
$Z\in\mathfrak z_{J,0}$, we have
\[
 \langle\nu,Z\rangle=\langle\omega_j,Z\rangle.
\]
Separating these common central factors in
\eqref{eq:face-coefficient-sum} gives
\beq\lb{exp P}
 P_j^H(r)
 =
 e^{2\langle\omega_j,K_Z\rangle}
 r^{-2\langle\omega_j,q_Z\rangle}
 \sum_{\nu\in\Wt(V^J_{\omega_j})}
 d_{j,\nu}\,e^{2\langle\nu,K^J\rangle}
 r^{-2\langle\nu,q_J\rangle}.
\eeq
For $j\in J$, define the Levi sum by
\[
S_{j,J}^{1,K^J}(r;q_J)
=\sum_{\nu\in\Wt(V^J_{\omega_j})}
 d_{j,\nu}e^{2\langle\nu,K^J\rangle}r^{-2\langle\nu,q_J\rangle}.
\]
This is the sum analogous to \er{S sum} for the Levi subalgebra $\fg_J$ with 
its coefficient \er{def d} the counterpart of \er{def c}. 

For $j\notin J$, Lemma~\ref{lem:face-weights} gives
$V^J_{\omega_j}=\mathbb C v_{\omega_j}$, so the sum in \er{exp P} has only one term for $\nu=\omega_j$. 
Then the coefficient $d_{j, \omega_j} = 1$ by \er{def d}, and 
from 
$\omega_j|_{\mathfrak h_{J,0}}=0$, we see this term is $1$.
Set $S_{j,J}^{1,K^J}=1$ for $j\notin J$. 

We obtain for all $j$ between 1 and $n$, 
\begin{equation*}
P_j^H(r)=e^{2\langle\omega_j,K_Z\rangle}
 r^{-2\langle\omega_j,q_Z\rangle}S_{j,J}^{1,K^J}(r;q_J).
\end{equation*}

Finally, for $i\in J$ and $Z\in\mathfrak z_{J,0}$,
\[
 \sum_{j=1}^n a_{ij}\langle\omega_j,Z\rangle
 =\langle\alpha_i,Z\rangle=0.
\]
Therefore,
\begin{align*}
\sum_{j=1}^n a_{ij}\log P_j^H(r)
&=2\sum_j a_{ij}\la\omega_j,K_Z\ra
  -2\log r\sum_j a_{ij}\la\omega_j,q_Z\ra\\
&\qquad+\sum_j a_{ij}\log S_{j,J}^{1,K^J}(r;q_J)\\
&=\sum_{j\in J}a_{ij}\log S_{j,J}^{1,K^J}(r;q_J).
\end{align*}
\end{proof}

\section{Logarithmic differentiation, probabilities, and mass}\label{sec:mass}
In this section, we will prove Theorem \ref{just 0} for the uniform selection bound. 
Fix $\lambda>0$ and $H\in\fh_0$. We omit them from the notation and
write, for example, $u_i(r)$ for $u_i^{\lambda,H}(r)$ and $F_i(r)$ for
$F_i^{\lambda,H}(r)$. Along the way, we interpret the enclosed mass
$\sigma_i(u;r)$ as a weighted average of the quantities $d_i(\eta)$
defined in \eqref{def c}.

The quantity $e^{F_i(r)}$ is the mass density in logarithmic radius:
\beq\lb{rate}
 e^{u_i(r)}r\,dr=e^{F_i(r)}\,d\log r.
\eeq
With $s=\log r$, the operator $D=r\partial_r$ introduced in Proposition \ref{bound asym} 
is $\partial_s$.
For a radial function $f$, direct differentiation gives
\[
 D^2f=r^2f_{rr}+rf_r=r^2\Delta f.
\]
On $\C^*$, the equation for $U_i$ in \eqref{eq:Ui-ui} 
$$
\Delta U_i + 4e^{u_i} = 0
$$
therefore gives
\begin{equation}\label{eq:radial-density}
 \Delta\log T_i=4e^{u_i},
 \qquad D^2\log S_i=4r^2e^{u_i}
 =4e^{F_i}.
\end{equation}
Here by \er{eq:T-S}, $\log T_i-\log S_i=2\langle\omega_i,q_\mu\rangle\log r$
has zero second logarithmic derivative.

We now differentiate the finite sum \eqref{S sum}
explicitly. 
For $\eta\in \Wt(V_{\om_i})$, abbreviate
\[
 b_\eta=\langle\eta,q_\mu\rangle,\qquad
 A_\eta(r)=c_{i,\eta}e^{2\lambda\langle\eta,H\rangle}r^{-2b_\eta},
 \qquad S_i(r)=\sum_\eta A_\eta(r).
\]
Since $DA_\eta=-2b_\eta A_\eta$, we have
\[
 DS_i=-2\sum_\eta b_\eta A_\eta,
 \qquad D^2S_i=4\sum_\eta b_\eta^2A_\eta.
\]
Normalize the nonnegative summands by setting
\begin{equation*}
 p_{i,\eta}(r)=\frac{A_\eta(r)}{S_i(r)},
 \qquad \sum_\eta p_{i,\eta}(r)=1.
\end{equation*}
They define a probability distribution on the finite weight set $\Wt(V_{\om_i})$.
With $p_\eta=p_{i,\eta}(r)$ and
$\bar b=\sum_\eta p_\eta b_\eta$, the preceding derivatives yield
\begin{align}
 D\log S_i&=\frac{DS_i}{S_i}=-2\bar b,
 \label{eq:mean-slope}\\
 D^2\log S_i
 &=\frac{D^2S_i}{S_i}-\left(\frac{DS_i}{S_i}\right)^2
 =4\left(\sum_\eta p_\eta b_\eta^2-\bar b^2\right).
 \label{eq:second-moment}
\end{align}
Combining \eqref{eq:radial-density} and \eqref{eq:second-moment},
we obtain
\begin{equation}\label{eq:variance}
 e^{F_i(r)}
 =\sum_\eta p_{i,\eta}(r)b_\eta^2
 -\left(\sum_\eta p_{i,\eta}(r)b_\eta\right)^2,
\end{equation}
Thus the cylindrical density is the variance of the weight evaluations $b_\eta$.
The probability language here is a clean way to describe these
normalized positive sums.

Relatedly, the enclosed mass has an equally direct interpretation. Integrating the equation 
$$\Delta U_i+4e^{u_i}=4\pi\gamma^i\delta_0$$ over $B_r$ gives
\[
 2\pi DU_i(r)+4\int_{B_r}e^{u_i}=4\pi\gamma^i,
\]
where we have used
$$
\int_{B_r}\Delta U_i = \int_{S_r}\frac{\p U_i}{\p r} = \frac{1}{r} DU_i \cdot 2\pi r = 2\pi DU_i(r).
$$
Using \eqref{eq:Ui-ui}, \eqref{eq:T-S}, and \eqref{eq:mean-slope},
we find
\begin{equation}\label{eq:radial-mass}
 \begin{aligned}
 \sigma_i(u;r)
 &=\gamma^i-\tfrac12DU_i(r)
 =\tfrac12D\log T_i(r)\\
 &=\langle\omega_i,q_\mu\rangle+\tfrac12D\log S_i(r)\\
 &=\langle\omega_i,q_\mu\rangle-\bar b
 =\sum_\eta p_{i,\eta}(r)d_i(\eta).
 \end{aligned}
\end{equation}
This identity holds at every radius, before any limit is taken:
the actual enclosed mass is the weighted average of the fixed
values $d_i(\eta)=\langle\omega_i-\eta,q_\mu\rangle$ \er{def c} attached to
the weights.

\begin{remark} Differentiating \er{eq:radial-mass} with respect to $s$ and using the easily verified
$$
\frac{dp_{i,\eta}}{ds} = 2(\bar b - b_\eta) p_{i,\eta},
$$
we obtain $d\sigma_i/ds=2\operatorname{Var}(b_\eta)=2e^{F_i}$, recovering \er{eq:variance} by \er{rate} and the normalization of $\sigma_i$. 
\end{remark}

Let $\kappa$ be the longest element of the full Weyl group. The
largest and smallest $b_\eta$ in $V_{\omega_i}$ are
\begin{equation*}
 B_i=\langle\omega_i,q_\mu\rangle,
 \qquad L_i=\langle\kappa\omega_i,q_\mu\rangle.
\end{equation*}
Indeed, $\omega_i-\eta$ is a nonnegative integral combination of
simple roots, so $B_i\geq \langle\eta,q_\mu\rangle$.
Applying this fact to the weight $\kappa\eta$ and then applying
$\kappa$ shows that $\eta-\kappa\omega_i$ is a nonnegative
combination of simple roots; hence $\langle\eta,q_\mu\rangle\geq L_i$.
Strict positivity of the simple-root evaluations of $q_\mu$ makes
both extremizers unique.

\begin{proof}[Proof of Theorem~\ref{just 0}]
We use the standard variance bound for a bounded random variable. 

Since $b_\eta$ lies in $[L_i,B_i]$,
\[
 (b_\eta-L_i)(B_i-b_\eta)\geq0,
 \qquad b_\eta^2\leq(L_i+B_i)b_\eta-L_iB_i.
\]
Multiplying by $p_\eta$ and summing, then using
\eqref{eq:variance}, yields
\begin{align*}
 e^{F_i}
 &=\sum_\eta p_\eta b_\eta^2-\bar b^2\\
 &\leq(L_i+B_i)\bar b-L_iB_i-\bar b^2\\
 &=(B_i-\bar b)(\bar b-L_i)\\
 &\leq \frac{(B_i-L_i)^2}{4} = \frac{\mathcal M_i^2}{4}.
\end{align*}
The endpoints $L_i,B_i$ depend only on $q_\mu$ and the representation;
they do not depend on $H$, $\lambda$, or $r$. This proves all the
asserted uniformity.
\end{proof}

\section{Fast boundaries and enclosed masses}\label{sec:fast}

The following exact rescaling identity converts the exponential
scales into translation of the Cartan parameter. Together with
Sections~\ref{sec:crossings} and~\ref{sec:weights}, it determines
both the fast boundaries and the slow bubbling scales. 

\begin{proposition}\lb{key id}
For the scaling 
\begin{equation}\label{ansatz}
 r_\lambda^t=e^{-\lambda t},\qquad t\geq0,
\end{equation}
we have the exact rescaling identities
\begin{align}
 S_j^{\lambda,H}(e^{-\lambda t}\rho)
 &=S_j^{\lambda,X_H(t)}(\rho),\qquad X_H(t)=H+tq_\mu,
 \label{eq:S-log-time}\\
 u_i^{\lambda,H}(e^{-\lambda t}\rho)-2\lambda t
 &=u_i^{\lambda,X_H(t)}(\rho).
 \label{eq:scaled-field-exact}
\end{align}
\end{proposition}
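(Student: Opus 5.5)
The plan is to verify \eqref{eq:S-log-time} term by term in the finite weight sum \eqref{S sum}, and then deduce \eqref{eq:scaled-field-exact} from the exact identity \eqref{eq:cylindrical-exact}.

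For \eqref{eq:S-log-time}, substitute $r=e^{-\lambda t}\rho$ into \eqref{S sum}. The summand indexed by $\eta\in\Wt(V_{\omega_j})$ becomes
\[
c_{j,\eta}e^{2\lambda\langle\eta,H\rangle}e^{2\lambda t\langle\eta,q_\mu\rangle}\rho^{-2\langle\eta,q_\mu\rangle}
=c_{j,\eta}e^{2\lambda\langle\eta,H+tq_\mu\rangle}\rho^{-2\langle\eta,q_\mu\rangle}.
\]
The coefficients $c_{j,\eta}$ in \eqref{def c} depend only on $\Phi(1)$, hence only on $q_\mu$, and not on $H$ or $\lambda$. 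Summing over $\eta$ therefore gives exactly $S_j^{\lambda,X_H(t)}(\rho)$, since the right-hand side is \eqref{S sum} with $H$ replaced by $X_H(t)$.

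For \eqref{eq:scaled-field-exact}, use \eqref{eq:cylindrical-exact}, which states
\[
u_i^{\lambda,H}(r)=-2\log r-\sum_j a_{ij}\log S_j^{\lambda,H}(r)
\]
for every $H$. At $r=e^{-\lambda t}\rho$ we have $-2\log r=2\lambda t-2\log\rho$. Applying \eqref{eq:S-log-time} to each $S_j$ then gives
\[
u_i^{\lambda,H}(e^{-\lambda t}\rho)=2\lambda t-2\log\rho-\sum_j a_{ij}\log S_j^{\lambda,X_H(t)}(\rho)=2\lambda t+u_i^{\lambda,X_H(t)}(\rho),
\]
where the last step is \eqref{eq:cylindrical-exact} again, now with $H$ replaced by $X_H(t)$. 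Subtracting $2\lambda t$ yields the claim. Equivalently, $F_i^{\lambda,H}(e^{-\lambda t}\rho)=F_i^{\lambda,X_H(t)}(\rho)$.

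No step is a real obstacle. The only point to check is that \eqref{eq:cylindrical-exact} holds for an arbitrary element of $\fh_0$, in particular $X_H(t)$, which need not lie in a chamber interior. This is fine: the derivation of \eqref{eq:cylindrical-exact} used only \eqref{eq:T-S}, \eqref{raw} and \eqref{def q}, none of which depends on $H$.
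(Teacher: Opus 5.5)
Your proposal is correct and follows essentially the same route as the paper: substitute term by term in \eqref{S sum} to get \eqref{eq:S-log-time}, then feed this into \eqref{eq:cylindrical-exact} at $r=e^{-\lambda t}\rho$. Your extra remarks, that the $c_{j,\eta}$ are independent of $H$ and $\lambda$ and that \eqref{eq:cylindrical-exact} holds for arbitrary $H\in\fh_0$, make explicit points the paper uses tacitly.
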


\begin{proof}
In each term of \eqref{S sum}, we see that 
$$
c_{i,\eta}e^{2\lambda\langle\eta,H\rangle}
 (e^{-\lambda t}\rho)^{-2\langle\eta,q_\mu\rangle} = 
 c_{i,\eta}e^{2\lambda\langle\eta,H+tq_\mu\rangle}
 \rho^{-2\langle\eta,q_\mu\rangle},
 $$
and this proves \er{eq:S-log-time}. 

Now, this and \eqref{eq:cylindrical-exact} give
$$ u_i^{\lambda, H}(e^{-\lambda t}\rho) + 2\log (e^{-\lambda t}\rho) 
= u_i^{\lambda, X_H(t)}(\rho) + 2\log(\rho),
$$
hence proving \er{eq:scaled-field-exact}. 
\end{proof}

\begin{proof}[Proof of Part~\textup{(2)} of Theorem~\ref{thm:main}]
Fix $t\in(t_a,t_{a+1})$ and put $\rho=|y|$. Since
$X_H(t)\in w_aC_0$, \eqref{eq:scaled-field-exact} and the
$J=\varnothing$ case of Proposition~\ref{bound asym} give
\begin{multline*}
 u_i^{\lambda,H}(e^{-\lambda t}y)-2\lambda t = u_i^{\lambda, X_H(t)}(\rho)\\
 =F_i^{\lambda, X_H(t)}(\rho) - 2\log\rho = -2\lambda\langle\alpha_i,w_a^{-1}X_H(t)\rangle+O_{\cc}(1)
 \longrightarrow-\infty
\end{multline*}
on each compact $\cc\Subset\C^*$ for all $i$, since $w_a^{-1}X_H(t)\in C_0$. 

The case of $y=1$ shows that $u_i^{\lambda, H}$ has fast decay at \er{ansatz} for $t\in (t_a, t_{a+1})$. 

Now we study the mass using the general formula \er{eq:radial-mass} at $r_\lambda^t = e^{-\lambda t}$. 
Then 
$$
\sigma_i(u^{\lambda, H}; e^{-\lambda t}) = \sum_{\eta\in \Wt(V_{\om_i})} p_{i,\eta}^{\lambda, H}(e^{-\lambda t})d_i(\eta),
$$
and 
$$
p_{i,\eta}^{\lambda, H}(e^{-\lambda t}) = \frac{c_{i,\eta}e^{2\lambda\la\eta, X_H(t)\ra}}{\sum_\eta c_{i,\eta}e^{2\lambda\la\eta, X_H(t)\ra}}.
$$
Since $X_H(t)\in w_aC_0$, by Lemma \ref{lem:face-weights} for $J=\emptyset$, only $\eta=w_a\om_i$ maximizes the exponent $\la \eta, X_H(t)\ra$. Therefore, as $\lambda\to \infty$, we have
$$
\lim_{\lambda\to\infty}\sigma_i(u^{\lambda,H};e^{-\lambda t})
=d_i(w_a\omega_i)=\sigma_i(w_a),
$$
by \er{def c} and \er{mass for w}. 

These estimates are uniform for $t$ in compact subintervals of
$(t_a,t_{a+1})$. In particular, the mass between two such fixed
logarithmic times tends to zero. 
\end{proof}

\section{Slow decay and Levi--Toda bubbles}\label{sec:slow}

\begin{proof}[Proof of Parts~\textup{(3)} and \textup{(4)} of
Theorem~\ref{thm:main}] 
In this part, we fix a crossing time $t_a$, put $J=J_a$, $w=w_a$, and write 
$$
X_*=H+t_aq_\mu\in w\cf_J,\quad Y=w^{-1}X_*\in \cf_J.
$$


We write \er{had it} for $w=w_a$ as 
$$
q^{\mathrm{eff}}=w_a^{-1}q_\mu=q^{(a)}+q_Z.
$$
For $t>t_a$ sufficiently close to $t_a$,
\[
 w^{-1}X_H(t)=w^{-1}(X_*+(t-t_a)q_\mu) = Y+(t-t_a)q^{\mathrm{eff}}\in C_0.
\]
Since $\langle\alpha_i,Y\rangle=0$ for $i\in J$, this gives
\begin{equation}\label{eq:active-positive}
 \langle\alpha_i,q^{\mathrm{eff}}\rangle=\la w\al_i, q_\mu\ra>0\qquad(i\in J).
\end{equation}

By 
\eqref{eq:scaled-field-exact} and \eqref{eq:boundary-F}, the rescaled solution $\widetilde u_i^{\lambda,a}(y)$ 
in \er{a-scale} becomes
\begin{equation*}
\begin{split} 
 \widetilde u_i^{\lambda,a}(y) &= u_i^{\lambda, X_*}(\rho) = F_i^{\lambda, X_*}(\rho) - 2\log\rho\\
 &=-2\lambda\langle\alpha_i,Y\rangle-2\log\rho
 -\sum_ja_{ij}\log P_j^{X_*}(\rho)+O_{\cc}(e^{-\delta\lambda}),
 \end{split}
\end{equation*}
with all derivatives on compact subsets $\cc\subset\C^*$.
The fields are radial, so the logarithmic derivative estimates on
compact radial intervals imply these Cartesian derivative estimates.
For $i\notin J$, $\langle\alpha_i,Y\rangle>0$ and 
\beq\lb{not j}
\widetilde u_i^{\lambda, a}(y) \to -\infty,\quad i\notin J.
\eeq
For $i\in J$, $\langle\alpha_i,Y\rangle=0$ and, by Proposition \ref{prop:P-S}, the limit is
\begin{equation}\label{eq:active-profile}
 \begin{aligned}
 v_i^{(a)}(\rho)
 &=-2\log\rho-\sum_{j=1}^na_{ij}\log P_j^{X_*}(\rho)\\
 &=-2\log\rho-\sum_{j\in J}a_{ij}
 \log S_{j,J}^{1,K^{(a)}}(\rho;q^{(a)}),
 \end{aligned}
\end{equation}
for some $K^{(a)}\in \fh_{J_a,0}$. 
Thus $v^{(a)}(\rho)$, including its
coefficients, is an explicit Levi--Toda solution
for $\fg_{J_a}$ as in \er{eq:cylindrical-exact}. By \eqref{eq:active-positive}, $q^{(a)}$ is
dominant for the Levi root system. Thus the canonical construction
gives a finite-energy solution with singularity strengths $\gamma_i^{(a)}$ such that 
\[
\gamma_i^{(a)} + 1=\langle\alpha_i,q^{(a)}\rangle
=\la \al_i, w_a^{-1}q_\mu\ra = \langle w_a\alpha_i,q_\mu\rangle>0,
\qquad i\in J,
\]
This verifies the Levi-Toda system 
\eqref{eq:levi-main} with its strength of singularity \er{strength}. 
If $J$ is disconnected, the construction applies
separately to each simple factor. This proves Part~\textup{(3)}.

In particular, for $y=1$, $F_i^{\lambda,H}(e^{-\lambda t_a})$ converges to
$v_i^{(a)}(1)$ for $i\in J$, proving the stated slow decay.

Now apply \cite[Theorem~1.1]{KLNW} to the explicit Levi solution in
\eqref{eq:active-profile}, which continues to hold even if $J$ is disconnected. 
The result says that the mass on the whole plane is the Weyl-group mass \er{mass for w} with respect to the current singularity strength for the longest Weyl group element. Therefore the $M_i^{(a)}$ in \er{whole mass} is 
\begin{equation*}
 \begin{aligned}
 M_i^{(a)}
 &=\langle\omega_i-\kappa_J\omega_i,q^{\mathrm{(a)}}\rangle\overset{\dagger}=\langle\omega_i-\kappa_J\omega_i, w_a^{-1}q_\mu\rangle\\
& =\langle w_a(\omega_i-\kappa_J\omega_i),q_\mu\rangle\\
 &=\la \om_i-w_a\kappa_J\om_i, q_\mu\ra - \la \om_i-w_a\om_i,q_\mu\ra \\
 &= \sigma_i(w_{a-1})-\sigma_i(w_a).
 \end{aligned}
\end{equation*}
The equality $\dagger$ holds since the difference $w_a^{-1}q_\mu - q^{(a)}\in \fz_{J, 0}$ and 
the difference $\omega_i-\kappa_J \om_i$ belongs to the root lattice of $\fg_J$. 
The last equality uses $w_{a-1}=w_a\kappa_J$ in \er{wwk}. 


For $i\notin J$, the Levi-Weyl group $W_J$ fixes $\omega_i$ and both
sides are zero. Since $\sigma_i(w_N)=\sigma_i(\Id)=0$, telescoping gives
\[
 \sum_{b=a}^NM_i^{(b)}=\sigma_i(w_{a-1}),\qquad
 \sum_{a=1}^NM_i^{(a)}=\sigma_i(\tau).
\]
This proves Part~\textup{(4)}. For $J=\{i_a\}$, the identity
$\omega_i-s_{i_a}\omega_i=\delta_{i,i_a}\alpha_{i_a}$
gives the stated Liouville mass.
\end{proof}

\begin{remark}
The Levi equations themselves already follow from
the much easier \er{not j}: the inactive fields tend to
$-\infty$, and passing to the limit in \eqref{Toda} gives
\[
 \Delta v_i^{(a)}+4\sum_{j\in J_a}a_{ij}e^{v_j^{(a)}}=0
 \quad\text{on }\mathbb C^*,\qquad i\in J_a.
\]
Proposition~\ref{prop:P-S} and \er{eq:active-profile} identify the particular explicit
solution, including its Cartan parameter and all coefficients.
\end{remark}

\begin{remark}
The singularity parameters \er{strength} satisfy
\begin{equation*}
\begin{split}
 \gamma_i^{(a)}
& =\langle\alpha_i,q^{\mathrm{(a)}}\rangle-1
=\la w_a \al_i, q_\mu\ra - 1 = \la \al_i, q_\mu\ra - \la \al_i-w_a\al_i,q_\mu\ra-1\\
&= \mu_i - \sum_{j} a_{ij}\la \om_j -w_a\om_j, q_\mu\ra - 1 = \gamma_i - \sum_j a_{ij} \sigma_j(w_a)>-1,
 \qquad i\in J.
 \end{split}
\end{equation*}
Thus the effective source subtracts the masses of the smaller bubbles
already enclosed. At the innermost scale $w_N=\Id$, it equals the
original source on the active components.

The term $\sum_j a_{ij}\sigma_j(w_a)$ records the contribution of
the smaller-scale bubbles, whose masses concentrate at the origin
under the present rescaling.

Indeed, their contribution to $4\sum_j a_{ij}e^{u_j}$ in
\eqref{Toda} converges to
$4\pi\sum_j a_{ij}\sigma_j(w_a)\delta_0$, which moves to the
right-hand side in the limiting equation. 
\end{remark}

\begin{remark}[The first bubbling scale]
For the nonsingular Toda system, the usual choice
\[
\varepsilon^\lambda=e^{-M^\lambda/2},
\qquad M^\lambda=\max_i\sup_{B_2(0)}u_i^\lambda,
\]
makes the rescaled fields
$\widetilde u_i^\lambda(y)
=u_i^\lambda(\varepsilon^\lambda y)-M^\lambda$ in \er{scaling} 
nonpositive on the rescaled domain.

In our singular setting, the rescaling \eqref{scaling}
transforms the regular parts \eqref{reg part} by
\[
\widehat{\widetilde u}_i^\lambda(y)
=\hat u_i^{\lambda,H}(\varepsilon^\lambda y)
 +2\mu_i\log\varepsilon^\lambda.
\]
Thus, to make their maximum over the components at the origin
equal to zero, we choose
\[
\varepsilon^\lambda
=\exp\left(-\frac12\max_i
\frac{\hat u_i^{\lambda,H}(0)}{\mu_i}\right)
=\exp\left(-\lambda\max_i
\frac{-\la \al_i, H\ra}{\mu_i}\right),
\]
by \er{reg0}. 
The division by $\mu_i$ accounts for the different scaling rates
of the regular parts.

For $\tau\ne\Id$, this is exactly the innermost scale
$e^{-\lambda t_N}$ in \er{aftall}, the first bubbling scale when proceeding
outward from the origin. Indeed, $H+tq_\mu\in C_0$ precisely
when $\langle\alpha_i,H\rangle+t\mu_i>0$ for every $i$, so
\[
t_N=\max_i\frac{-\langle\alpha_i,H\rangle}{\mu_i}.
\]

Starting from this normalized innermost scale, the subsequent
bubbling scales are determined by the other intersections of
the ray $H+tq_\mu$ with root hyperplanes, taken in decreasing
order of $t$ as the radius increases.
\end{remark}

\section{Explicit \texorpdfstring{$A_2$}{A2} examples}\label{sec:A2}

Let
\[
 A=\begin{pmatrix}2&-1\\-1&2\end{pmatrix},\qquad
 \mu_1,\mu_2>0,\qquad m=\mu_1+\mu_2.
\]
We first examine one bubble and its central factors explicitly, then
compare separated crossings with a simultaneous crossing.

\subsection{The weight sums}
In the standard representation of $\SL_3$, take
$e_{-\alpha_1}=E_{21}$ and $e_{-\alpha_2}=E_{32}$. The defining
initial-value problem gives
\[
 \Phi(z)=\begin{pmatrix}
 1&0&0\\
 z^{\mu_1}/\mu_1&1&0\\
 z^m/(\mu_2m)&z^{\mu_2}/\mu_2&1
 \end{pmatrix}.
\]
Indeed, differentiating this matrix gives
$\Phi_z=\Phi(z^{\gamma_1}E_{21}+z^{\gamma_2}E_{32})$.
Write $H=\diag(h_1,h_2,h_3)$ and
$q_\mu=\diag(q_1,q_2,q_3)$, with trace zero, so
$q_1-q_2=\mu_1$ and $q_2-q_3=\mu_2$.
Using $\C^3$ and $\bigwedge^2\C^3$ for the fundamental
representations, we obtain
\begin{align}
 S_1^{\lambda,H}(r)
 &=e^{2\lambda h_1}r^{-2q_1}
 +\frac{e^{2\lambda h_2}}{\mu_1^2}r^{-2q_2}
 +\frac{e^{2\lambda h_3}}{\mu_2^2m^2}r^{-2q_3},
 \label{eq:A2-S1}\\
 S_2^{\lambda,H}(r)
 &=e^{2\lambda(h_1+h_2)}r^{-2(q_1+q_2)}
 +\frac{e^{2\lambda(h_1+h_3)}}{\mu_2^2}r^{-2(q_1+q_3)}
  \label{eq:A2-S2}
 \\
 &\hspace{14pt}
 +\frac{e^{2\lambda(h_2+h_3)}}{\mu_1^2m^2}r^{-2(q_2+q_3)}.
\notag
\end{align}
The two fields are
$u_1=-2\log r-2\log S_1+\log S_2$ and
$u_2=-2\log r+\log S_1-2\log S_2$.

\subsection{One bubble and the role of the center}\label{sec:A2-center-check}
Before listing the general crossing patterns, we work out one bubble.
Take
\[
 \mu_1=2,\quad\mu_2=3,\quad
 q_\mu=\operatorname{diag}(7/3,1/3,-8/3),\quad
 H=\operatorname{diag}(-3,1,2).
\]
At $t=1$ we have $X_H(1)= H + 1\cdot q_\mu=\operatorname{diag}(-2/3,4/3,-2/3)$.
Thus $w=s_1$ and $J=\{2\}$: indeed,
$w^{-1}X_H(1)=\operatorname{diag}(4/3,-2/3,-2/3)$, so $\alpha_2$ vanishes on it. 

At the scale $r=e^{-\lambda}\rho$, the exposed terms in
\eqref{eq:A2-S1}--\eqref{eq:A2-S2} are
\[
 P_1(\rho)=\frac14\rho^{-2/3},\qquad
 P_2(\rho)=\rho^{-16/3}+\frac1{100}\rho^{14/3}.
\]
This follows since the three powers of $e^{2\lambda}$ in $S_1$ of \er{eq:A2-S1} are 
$$
h_1+ q_1 = -\frac{2}{3},\quad h_2+q_2=\frac{4}{3},\quad h_3+q_3=-\frac{2}{3},
$$
so $P_1$ records the coefficient of the second term with highest power as 
$$
\frac{1}{\mu_1^2}\rho^{-2q_2} = \frac{1}{4} \rho^{-2/3}.
$$
Similarly, we get $P_2$ from $S_2$ as the coefficients of the first and the last term in \er{eq:A2-S2}.

The rescaled fields therefore satisfy
\[
 \begin{aligned}
 \widetilde u_1^\lambda
 &=-4\lambda-2\log\rho-2\log P_1+\log P_2+o(1)
 \longrightarrow-\infty,\\
 \widetilde u_2^\lambda
 &=-2\log\rho+\log P_1-2\log P_2+o(1)
 \longrightarrow v_2.
 \end{aligned}
\]
In fact,
\[
 \frac{P_2}{\sqrt{P_1}}=2\rho^{-5}+\frac1{50}\rho^5,
 \qquad
 v_2=8\log\rho-2\log\left(2+\frac{\rho^{10}}{50}\right).
\]
We now recover these coefficients from the Gauss decomposition, that is, we identify the $P_i$ with suitable $S_i$.

Write $\psi=\Phi(1)$ and choose the compact representative
\[
 \psi=\begin{pmatrix}1&0&0\\1/2&1&0\\1/15&1/3&1\end{pmatrix},
 \qquad
 \bar w=\begin{pmatrix}0&1&0\\-1&0&0\\0&0&1\end{pmatrix}.
\]
Direct multiplication verifies
\[
 \bar w^{-1}\psi=h\psi_1n_+,\qquad
 h=\operatorname{diag}(-1/2,-2,1),
\]
\[
 \psi_1=\begin{pmatrix}1&0&0\\-1/2&1&0\\1/15&1/5&1\end{pmatrix},\quad
 n_+=\begin{pmatrix}1&2&0\\0&1&0\\0&0&1\end{pmatrix}.
\]
Here $q'=w^{-1}q_\mu=\operatorname{diag}(1/3,7/3,-8/3)$ and
$\psi_1^{-1}q'\psi_1=q'-E_{21}-E_{32}$.
For $J=\{2\}$ the usual Levi projection on $N_-$ is
\[
 \pi_J\begin{pmatrix}1&0&0\\a&1&0\\b&c&1\end{pmatrix}
 =\begin{pmatrix}1&0&0\\0&1&0\\0&c&1\end{pmatrix}.
\]
Consequently
\[
 \psi_J=\pi_J(\psi_1)
 =\begin{pmatrix}1&0&0\\0&1&0\\0&1/5&1\end{pmatrix}.
\]
The entry is $1/5$, not the $1/3$ in the original $\psi$:
we project the transformed unipotent factor. The Cartan decomposition is
\[
 q'=q_J+q_Z,
 \qquad q_J=\operatorname{diag}(0,5/2,-5/2),\quad
 q_Z=\operatorname{diag}(1/3,-1/6,-1/6).
\]
Thus $\psi_J^{-1}q_J\psi_J=q_J-E_{32}$, as required.

The Cartan factor requires a separate decomposition. From
$h^*h=e^{2K}$ we obtain
\[
 \begin{aligned}
 K&=\operatorname{diag}(-\log2,\log2,0)=K^J+K_Z,\\
 K^J&=\operatorname{diag}(0,\tfrac12\log2,-\tfrac12\log2),\\
 K_Z&=\operatorname{diag}(-\log2,\tfrac12\log2,\tfrac12\log2).
 \end{aligned}
\]
Both $q_Z$ and $K_Z$ have equal second and third entries, so
$\alpha_2$ vanishes on them. Hence
they lie in the center of this Levi algebra.

We verify the profile in both the $T$- and $S$-formulations.
The effective singularity strength is $\gamma_2^{(a)}=5-1=4$.
Restricting $K^J$ to its active two-dimensional block gives
$\diag(\frac12\log2,-\frac12\log2)$. With $\rho=|z|$, we obtain
\[
\begin{aligned}
v_2(\rho)
&=8\log\rho-2\log\left\|
\begin{pmatrix}\sqrt{2}&0\\0&1/\sqrt{2}\end{pmatrix}
\begin{pmatrix}1&0\\z^5/5&1\end{pmatrix}
\begin{pmatrix}1\\0\end{pmatrix}\right\|^2\\
&=8\log\rho-2\log\left(2+\frac{\rho^{10}}{50}\right),
\end{aligned}
\]
in agreement with the profile above.

The corresponding Levi sum is
\[
 S^{1,K^J}(\rho;q_J)
 =2\rho^{-5}+\frac{1}{2\cdot25}\rho^5,
\]
so that 
$$
2\log P_2 - \log P_1 = 2\log S.
$$

The scalar equation and its mass are
\[
 \Delta v_2+8e^{v_2}=16\pi\delta_0,\qquad
 \frac1\pi\int_{\mathbb R^2}e^{v_2}=5,
\]
which agree with $\langle w\alpha_2,q_\mu\rangle=\mu_1+\mu_2=5$.

\subsection{Three separated Liouville bubbles}
We now demonstrate the bubble decomposition in Theorem~\ref{thm:main}. 

Choose $p,q>0$ with
\[
 \langle\alpha_1,H\rangle=-p,\qquad
 \langle\alpha_2,H\rangle=-q,\qquad
 \frac{p}{\mu_1}<\frac{q}{\mu_2}.
\]
Then $H\in\kappa C_0$, where
$\kappa=s_2s_1s_2=s_1s_2s_1$ is the longest element of $W(A_2)$.
The roots crossed, in increasing logarithmic time, are
$\alpha_1,\alpha_1+\alpha_2,\alpha_2$, at
\[
 t_1=\frac{p}{\mu_1},\qquad
 t_2=\frac{p+q}{m},\qquad
 t_3=\frac{q}{\mu_2}.
\]
Their order follows because $t_2$ is the strictly intermediate
weighted average of $t_1$ and $t_3$. Starting with $w_0=\kappa$, each
crossed root $\beta_a$ determines $j_a$ by
$\alpha_{j_a}=-w_{a-1}^{-1}\beta_a$, and then
$w_a=w_{a-1}s_{j_a}$. The chamber and bubble data are
\[
\begin{array}{c|c|c|c|c}
 a&\beta_a=w_a\alpha_{j_a}&J_a&w_a&(M_1^{(a)},M_2^{(a)})\\ \hline
 1&\alpha_1&\{2\}&s_2s_1&(0,\mu_1)\\
 2&\alpha_1+\alpha_2&\{1\}&s_2&(m,0)\\
 3&\alpha_2&\{2\}&\Id&(0,\mu_2)
\end{array}
\]
Since $r_a=e^{-\lambda t_a}$ decreases as $a$ increases, we compute
the masses from outside in. The successive enclosed mass vectors are
\[
 \sigma(w_0)=(m,m),\quad \sigma(w_1)=(m,\mu_2),\quad
 \sigma(w_2)=(0,\mu_2),\quad \sigma(w_3)=(0,0).
\]
The differences $M_i^{(a)}=\sigma_i(w_{a-1})-\sigma_i(w_a)$ give
the bubble masses in the table. Notice that the index of the surviving component
is the simple-root index in the chamber after the crossing, rather
than the index of the root hyperplane in the original coordinates.
For example, crossing $\alpha_1^\perp$ first produces a bubble in
component $2$.

\subsection{A simultaneous crossing}
In the previous example, if $p/\mu_1=q/\mu_2=c>0$, then $H=-cq_\mu$ and
\[
 X_H(t)=(t-c)q_\mu.
\]
All three positive-root hyperplanes are crossed at $t=c$.
There is one block, $J=\{1,2\}$, with $w_0=\kappa$ and
$w_1=\Id$. Every weight is retained, and the effective parameter
is $q_\mu$ itself.

In this case the rescaling is exact for every $\lambda$:
\[
 S_j^{\lambda,-cq_\mu}(e^{-\lambda c}r)
 =S_j^{1,0}(r).
\]
The choice $\lambda=1$ on the right is immaterial because its
Cartan parameter is zero. Here $X_H(c)=0$, so both leading terms
$-2\lambda\langle\alpha_i,X_H(c)\rangle$ vanish.
Consequently the two rescaled fields are independent of $\lambda$.
Writing
\[
 \begin{aligned}
 T_1^{1,0}(r)&=1+\frac{r^{2\mu_1}}{\mu_1^2}
 +\frac{r^{2m}}{\mu_2^2m^2},\\
 T_2^{1,0}(r)&=1+\frac{r^{2\mu_2}}{\mu_2^2}
 +\frac{r^{2m}}{\mu_1^2m^2}.
 \end{aligned}
\]
the full $A_2$ bubble is
\[
 \begin{aligned}
 v_1&=2\gamma_1\log r-2\log T_1^{1,0}+\log T_2^{1,0},\\
 v_2&=2\gamma_2\log r+\log T_1^{1,0}-2\log T_2^{1,0}.
 \end{aligned}
\]
Its mass vector is again $(m,m)$. Thus the same endpoint mass is
realized either by three scalar bubbles at separated scales or by
one full $A_2$ bubble at a simultaneous crossing.

\end{document}